\renewcommand{\subjclass}[1]{\thanks{\emph{2000 Mathematics Subject Classification:}~#1}}
\renewcommand{\keywords}[1]{\thanks{\emph{Keywords and Phrases:}~#1}}
\newtheorem{thm}{Theorem}[section]
\newtheorem{lemma}[thm]{Lemma}
\newtheorem{prop}[thm]{Proposition}
\theoremstyle{definition}
\theoremstyle{remark}
\numberwithin{equation}{section}
\newcommand{\Cc}{{\mathbb C}}
\newcommand{\Zz}{{\mathbb Z}}
\newcommand{\Qq}{{\mathbb Q}}
\newcommand{\Rr}{{\mathbb R}}
\newcommand{\x}{{\bf x}}
\renewcommand{\leq}{\leqslant}
\renewcommand{\geq}{\geqslant}
\newcommand{\kdots}{,\ldots ,}
\title[On the quantitative Subspace Theorem]
{On the quantitative Subspace Theorem}
\subjclass{11J68, 11J25}
\keywords{Diophantine approximation, Subspace Theorem}
\author[J.-H.~EVERTSE]{Jan-Hendrik~EVERTSE}
\address{J.-H. Evertse,
Universiteit Leiden, Mathematisch Instituut,
Postbus 9512, 2300 RA Leiden,
The Netherlands}
\email{evertse@math.leidenuniv.nl}
\date{\today}
\begin{document}

\maketitle

\begin{abstract}
In this survey we give an overview of recent improvements
upon the Quantitative Subspace Theorem, obtained jointly
with R. Ferretti, which follow from work in \cite{EF-to-appear}.
Further, we give a new gap principle with which we can estimate
the number of subspaces containing the ``small solutions" of the 
systems of inequalities being considered.
As an introduction, we start with a quantitative version
of Roth's Theorem.
\end{abstract}

\section{A quantitative Roth's Theorem}\label{Section1}

Recall that the (absolute) height of an algebraic number $\xi$ of degree $d$
is given by
\[
H(\xi ):=\Big(a\cdot \prod_{i=1}^d\max (1,|\xi^{(i)}|)\Big)^{1/d},
\]
where
$\xi^{(1)}\kdots\xi^{(d)}$ are the conjugates of $\xi$ in $\Cc$
and where $a$ is the positive integer such that the polynomial
$a\cdot\prod_{i=1}^d (X-\xi^{(i)})$ has rational integral coefficients
with gcd $1$.
In particular, if $\xi\in\Qq$, then
$H(\xi )=\max (|x|,|y|)$,
where $x,y$ are coprime integers such that $\xi =x/y$.

Roth's celebrated theorem from 1955 (see \cite{Roth55}) states that
if $\xi$ is any real algebraic number and $\delta$ any real with
$\delta >0$, then the inequality
\begin{equation}\label{1.1}
|\xi -\alpha |\leq H(\alpha )^{-2-\delta}\ \ \mbox{in $\alpha\in\Qq$}
\end{equation}
has only finitely many solutions. Already in 1955, Davenport and Roth
\cite{DR55} computed an upper bound for the number of solutions of
\eqref{1.1}, and their bound was subsequently improved by
Mignotte \cite{Mign74}, Bombieri and van der Poorten \cite{BP88},
and the author \cite{Ev96}. We formulate a slight improvement of the latter
result which follows from the Appendix of \cite{BuEv08}.
We mention that this improvement is obtained by simply going through
the existing methods; its proof did not involve anything new.
We distinguish
between \emph{large} and \emph{small} solutions $\alpha$ of \eqref{1.1},
where a rational number $\alpha$ is called large if
\begin{equation}\label{1.2}
H(\alpha )\geq \max \big( H(\xi ) ,2\big)
\end{equation}
and small otherwise.

\begin{thm}\label{th:1.1}
Let $\xi$ be a real algebraic number of degree $d$ and $0<\delta\leq 1$.
Then the number of large solutions of \eqref{1.1} is at most
\[
2^{25}\delta^{-3}\log (2d)\log \big(\delta^{-1}\log (2d)\big)
\]
and the number of small solutions at most
\[
10\delta^{-1}\log\log\max\big( H(\xi ),4\big).
\]
\end{thm}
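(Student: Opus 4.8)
The plan is to bound the two kinds of solutions by completely different arguments: the small ones by an elementary gap principle, the large ones by Roth's method in its quantitative form (polynomial in $\delta^{-1}$, logarithmic in $d$), keeping track of the numerical constants as in the Appendix of \cite{BuEv08}. For the small solutions I would argue as follows. If $\alpha_1=x_1/y_1$ and $\alpha_2=x_2/y_2$ are distinct solutions of \eqref{1.1} in lowest terms with positive denominators, then $|\alpha_1-\alpha_2|\geq(y_1y_2)^{-1}\geq(H(\alpha_1)H(\alpha_2))^{-1}$, while by the triangle inequality and \eqref{1.1}, $|\alpha_1-\alpha_2|\leq 2\min(H(\alpha_1),H(\alpha_2))^{-2-\delta}$; hence, with $H_1\leq H_2$ the two heights, $H_2\geq\tfrac12H_1^{1+\delta}$. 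Ordering the small solutions by increasing height (at most three share a common height), the heights grow super-exponentially once they exceed $4^{1/\delta}$, which bounds those of height $\geq4^{1/\delta}$ by $O(\delta^{-1}\log\log\max(H(\xi),4))$; the remaining ones have height $<4^{1/\delta}$, and since any $x/y$ with $|\xi-x/y|<\tfrac12y^{-2}$ is a convergent of the continued fraction of $\xi$, whose denominators grow at least geometrically, while the few approximations that are not convergents can be listed directly, they contribute only $O(\delta^{-1})$ more. Tracking constants gives $10\delta^{-1}\log\log\max(H(\xi),4)$.

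For the large solutions the same computation gives $\log H(\alpha_{i+1})\geq(1+\tfrac12\delta)\log H(\alpha_i)$ for the large solutions ordered by increasing height, once the heights exceed $4^{1/\delta}$ (a short initial segment being absorbed as above). Consequently, for any $\Gamma>1$, retaining only every $k$-th solution with $k=O(\delta^{-1}\log\Gamma)$ turns any run of $N$ large solutions into a ``very well spaced'' subsequence $\beta_1,\dots,\beta_m$ with $m\geq N/k$ and $\log H(\beta_{j+1})\geq\Gamma\log H(\beta_j)$. It therefore suffices to bound the length of such a subsequence, together with the number of large solutions of height below a threshold $C_0=C_0(d,\delta)$; the former is where Roth's method enters.

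The heart of the matter is the Roth machine, which shows that there is no very well spaced sequence $\beta_1,\dots,\beta_m$ of solutions of \eqref{1.1} with $H(\beta_1)\geq C_0$ and $m\geq m_0$, where $m_0=O(\delta^{-2}\log(2d))$. Write $d=\deg\xi$ and pick degrees $r_1\gg\dots\gg r_m$ with $r_j\log H(\beta_j)$ roughly constant and each ratio $r_j/r_{j+1}$ large. (i) Siegel's Lemma over $\Qq(\xi)$ yields a non-zero $P\in\Zz[X_1,\dots,X_m]$ of degree $\leq r_j$ in $X_j$, with coefficients of modest size, whose index at $(\xi,\dots,\xi)$ relative to the weights $(r_j)$ is at least $\tfrac12m-O(\sqrt{m\log d})$ --- the $\log d$ arising because each vanishing condition amounts to $d$ linear equations over $\Qq$. (ii) Since $|\xi-\beta_j|\leq H(\beta_j)^{-2-\delta}$, Taylor expansion at $(\xi,\dots,\xi)$ forces all partial derivatives of $P$ of weighted order less than $\bigl(\tfrac12-\tfrac1{2+\delta}\bigr)m-O(\sqrt{m\log d})$ to be extremely small at $(\beta_1,\dots,\beta_m)$; as non-zero rationals with denominator dividing $\prod y_j^{r_j}$ they would contradict the product-formula lower bound unless they vanish, so the index of $P$ at $(\beta_1,\dots,\beta_m)$ is at least some $\theta$ of order $\delta m$, provided $m\gg\delta^{-2}\log d$. (iii) Because the $r_j$ decrease rapidly enough --- the necessary ratio $r_j/r_{j+1}$ being of size polynomial in $m$ --- and $H(\beta_1)$ is large, Roth's Lemma bounds the same index by a quantity strictly less than $\theta$. (iv) Comparing (ii) and (iii) produces a contradiction for $m\geq m_0$. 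The real obstacle is (iv): one must balance $r_1$, the index from Siegel's Lemma, the hypotheses of Roth's Lemma, and the $\delta$-loss at each passage so that $m_0$ stays $O(\delta^{-2}\log(2d))$ and $C_0$ stays $2^{O(\delta^{-2}\log(2d))}$, which is exactly what forces a sharp form of Roth's Lemma and the careful bookkeeping of \cite{Ev96} and \cite{BuEv08}.

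To assemble the bound: since Roth's Lemma needs $r_j/r_{j+1}$ polynomial in $m_0$, the admissible $\Gamma$ has $\log\Gamma=O(\log(\delta^{-1}\log(2d)))$, so the extraction costs $k=O(\delta^{-1}\log(\delta^{-1}\log(2d)))$, and the large solutions of height $\geq C_0$ number at most $k\,m_0=O(\delta^{-3}\log(2d)\log(\delta^{-1}\log(2d)))$; those of height in $[\max(H(\xi),2),C_0)$ are counted by the gap principle alone and number $O(\delta^{-1}\log\log C_0)=O(\delta^{-1}\log(\delta^{-1}\log(2d)))$. Adding the two and inserting the explicit constants from step (iv) gives $2^{25}\delta^{-3}\log(2d)\log(\delta^{-1}\log(2d))$. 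As the text already says, nothing here is new: it is the method of \cite{DR55}, \cite{Mign74}, \cite{BP88} and \cite{Ev96} run once more with the constants made explicit as in the Appendix of \cite{BuEv08}.
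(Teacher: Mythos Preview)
Your proposal is essentially the same strategy as the paper's: a basic gap principle combined with the Roth machinery (Siegel's Lemma, index propagation, Roth's Lemma), with constants tracked as in \cite{BuEv08}. The paper organizes this as an \emph{interval result} (Proposition~\ref{pr:1.2}: the large solutions have heights lying in $m-1$ intervals $[Q_i,Q_i^\omega)$ with $m=O(\delta^{-2}\log 2d)$ and $\log\omega=O(\log(\delta^{-1}\log 2d))$) followed by a gap principle counting solutions in each interval; you organize it as a gap principle giving height growth, then extraction of a well-spaced subsequence, then a Roth-type bound on its length. These are two phrasings of the same argument, and your parameters $m_0$, $\Gamma$, $k$ match the paper's $m$, $\omega$, and interval count.

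One small difference worth noting: the paper's gap principle (Proposition~\ref{pr:1.3}) splits into $\alpha>\xi$ and $\alpha<\xi$ and then uses $|\alpha_1-\alpha_2|\leq\max_i|\xi-\alpha_i|$ rather than the triangle inequality, which removes the factor $2$ you carry. This is why the paper's gap principle works cleanly for all $Q\geq 2$, while yours needs the auxiliary threshold $4^{1/\delta}$ and the separate continued-fraction discussion for the very small heights. Both are correct, but the sign-splitting trick is tidier and makes the small-solution bound a one-line consequence of the gap principle alone.
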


The proof of this result can be divided into two parts: a
so-called \emph{interval result} and a \emph{gap principle}.
The interval result may be stated as follows.

\begin{prop}\label{pr:1.2}
Let
\begin{eqnarray*}
&& m:= 1+[25600\delta^{-2}\log (2d)],\ \
\omega := 162m^2\delta^{-1} ,
\\
&& C:= \exp\Big( 3m\binom{d}{2}\delta^{-1}\big(240m^2\delta^{-1}\big)^m
\cdot\log\big( 36H(\xi )\big)\Big).
\end{eqnarray*}
Then there are reals $Q_1\kdots Q_{m-1}$ with
\[
C\leq Q_1<Q_2<\cdots <Q_{m-1}
\]
such that if $\alpha\in\Qq$ is a solution of \eqref{1.1} with $H(\alpha )\geq C$,
then
\[
H(\alpha )\in \bigcup_{i=1}^{m-1} \big[Q_i,Q_i^{\omega}\big).
\]
\end{prop}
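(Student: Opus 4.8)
The plan is to prove the contrapositive by running the classical Thue--Siegel--Roth argument with every constant made explicit. First I would show that if the asserted covering fails, then a greedy selection produces $m$ solutions $\alpha_1\kdots\alpha_m$ of \eqref{1.1} with $H(\alpha_1)\geq C$ and $H(\alpha_{h+1})\geq H(\alpha_h)^{\omega}$ for $h=1\kdots m-1$: take for $\alpha_1$ a solution of least height among those with $H\geq C$; having chosen $\alpha_h$, if not every remaining solution with $H\geq C$ lies in $[H(\alpha_h),H(\alpha_h)^{\omega})$, take for $\alpha_{h+1}$ a solution of least height $\geq H(\alpha_h)^{\omega}$. If this never terminates within $m-1$ intervals we obtain $\alpha_1\kdots\alpha_m$ as above, and it then suffices to derive a contradiction from their existence.

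Next, fix a large integer $r_1$ (its size is immaterial, as it will not enter $C$ or $\omega$) and set $r_h:=\lceil r_1\log H(\alpha_1)/\log H(\alpha_h)\rceil$, so that the numbers $R_h:=r_h\log H(\alpha_h)$ all lie within a factor close to $1$ of $R:=r_1\log H(\alpha_1)$, that $\sum_h r_h\leq 2r_1\leq 2R$, and that $r_h/r_{h+1}\geq\omega$ up to a negligible rounding loss. By a Thue--Siegel (pigeonhole) lemma applied to the $\Qq$-linear system expressing that the partial derivatives $\partial^{\mathbf i}P$ of small weighted order vanish at $(\xi\kdots\xi)$ --- a system with a factor $d$ more equations than the corresponding one over $\Qq$, because $\xi$ has degree $d$ --- one produces a nonzero $P\in\Zz[X_1\kdots X_m]$ with $\deg_{X_h}P\leq r_h$, with $\log H(P)\leq c_0\binom{d}{2}\log(36H(\xi))\sum_h r_h$ (the $\binom{d}{2}\log H(\xi)$ factor arising when the $\Qq(\xi)$-conditions are cleared to $\Zz$), and with index at least $\theta m$ at $(\xi\kdots\xi)$ relative to $(r_1\kdots r_m)$, where $\theta=\tfrac12-\sqrt{\log(2d)/(2m)}$. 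Here the hypothesis $m=1+[25600\delta^{-2}\log(2d)]$ is used precisely so that, via a Chernoff-type bound for the proportion of weighted-degree vectors of small index, this proportion stays below $1/(2d)$ and hence $\theta$ exceeds $\tfrac1{2+\delta}$ with room to spare.

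I would then bound from below the index $t_0$ of $P$ at the \emph{rational} point $(\alpha_1\kdots\alpha_m)$. Taylor-expanding $\partial^{\mathbf i}P/\mathbf i!$ about $(\xi\kdots\xi)$, using that it still vanishes there to weighted order $\geq\theta m-t$ when $|\mathbf i|_{\mathbf r}=t$, and invoking $|\xi-\alpha_h|\leq H(\alpha_h)^{-2-\delta}$ together with the coefficient bound for $P$, one gets $|(\partial^{\mathbf i}P/\mathbf i!)(\alpha_1\kdots\alpha_m)|\leq\exp\big(c_0\binom{d}{2}\log(36H(\xi))\sum_h r_h-(2+\delta)R(\theta m-t)\big)$; on the other hand, if this value is nonzero it is a rational with denominator dividing $\prod_h H(\alpha_h)^{r_h}$, hence is $\geq\exp(-\sum_h R_h)\geq\exp(-(1+o(1))mR)$. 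Since $\sum_h r_h\leq 2R/\log H(\alpha_1)$, the polynomial-height term in the first exponent is $\leq 2c_0\binom{d}{2}\log(36H(\xi))R/\log H(\alpha_1)$, which is absorbed into $o(mR)$ once $\log H(\alpha_1)\geq\log C$; comparing the two estimates forces vanishing whenever $t<\theta m-\tfrac{m}{2+\delta}-o(m)$, so $t_0\geq m\big(\theta-\tfrac1{2+\delta}\big)-o(m)\geq c_1\delta m$ for an explicit $c_1>0$.

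Finally I would apply Roth's Lemma to $P$ at $(\alpha_1\kdots\alpha_m)$: its hypotheses --- that the degrees decrease rapidly, by at least a fixed polynomial factor in $m$ and $\delta^{-1}$ at each step, and that each $H(\alpha_h)$ is large enough relative to $H(P)$ --- are exactly what the choices $\omega=162m^2\delta^{-1}$ and the stated $C$ are engineered to guarantee; the iteration over the $m$ variables in the proof of Roth's Lemma is what forces $\log C$ to carry the factor $(240m^2\delta^{-1})^m$, i.e. a power of a polynomial in $m,\delta^{-1}$, while the dependence on $\xi$ propagates through $\log H(P)$ as the factor $\binom{d}{2}\log(36H(\xi))$. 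Roth's Lemma then yields $t_0\leq\varepsilon m$ with $\varepsilon$ as small as we wish, contradicting $t_0\geq c_1\delta m$. The main obstacle is the simultaneous calibration of the three parameters: $m$ must be large enough --- hence $\delta^{-2}\log(2d)$ --- for the Thue--Siegel lemma to beat the exponent $2+\delta$ after the $d$-fold inflation of conditions, while $\omega$ and $C$ must be large enough to meet the quantitative hypotheses of Roth's Lemma, and it is the careful bookkeeping in the latter, rather than any new idea, that produces the explicit (not merely effective) values displayed in the statement.
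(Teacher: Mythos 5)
Your proposal correctly reconstructs the standard Roth machinery that the paper (very briefly) sketches: the contrapositive/greedy selection yielding $\alpha_1,\ldots,\alpha_m$ with $H(\alpha_1)\geq C$ and $H(\alpha_{h+1})\geq H(\alpha_h)^\omega$, the Thue--Siegel construction of an auxiliary polynomial with large index at $(\xi,\ldots,\xi)$ and controlled height, the transfer of a lower index bound to $(\alpha_1,\ldots,\alpha_m)$ via the approximation hypothesis, and the contradiction with Roth's Lemma, together with the correct accounting of where $m\sim\delta^{-2}\log(2d)$, $\omega\sim m^2\delta^{-1}$, and the $\binom{d}{2}(240m^2\delta^{-1})^m$ factor in $\log C$ enter. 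This is essentially the same route the paper indicates, modulo the paper's use of the sharpened Roth's Lemma from \cite{Ev95} (proved via Faltings' Product Theorem) to obtain the precise constants, which you allude to but do not single out.
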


The proof is by means of the usual ``Roth machinery."
Assume Theorem \ref{1.2} is false. Then \eqref{1.1} has solutions
$\alpha_1\kdots\alpha_m$ such that $H(\alpha_1)\geq C$ and
$H(\alpha_i)\geq H(\alpha_{i-1})^{\omega}$ for $i=1\kdots m$.
One constructs a polynomial $F(X_1\kdots X_m)$ which has integer
coefficients of small absolute value,
and which is of degree $d_i$ in the variable $X_i$
for $i=1\kdots m$, such that
$H(\alpha_1)^{d_1}\approx\cdots\approx H(\alpha_m)^{d_m}$, and such that
$F$ has large ``index" (some sort of weighted multiplicity) at the point
$(\alpha_1\kdots\alpha_m)$. Then one applies Roth's Lemma
(a non-vanishing result for polynomials) to conclude that $F$ cannot
have large index at $(\alpha_1\kdots\alpha_m)$.
In fact, we use a refinement of Roth's original Lemma from 1955
(see \cite{Ev95})
which was proved by means of the techniques going into the proof of Faltings'
Product Theorem \cite{Falt91}.

The second ingredient is the following very basic gap principle.

\begin{prop}\label{pr:1.3}
Let $Q\geq 2$. Then \eqref{1.1} has at most one solution $\alpha$
such that $Q\leq H(\alpha )<Q^{1+\delta /2}$ and $\alpha >\xi$,
and also at most one solution $\alpha$ such that
$Q\leq H(\alpha )<Q^{1+\delta /2}$ and $\alpha <\xi$.
\end{prop}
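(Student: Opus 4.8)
The plan is to argue by contradiction, using the elementary ``fundamental inequality'' that two distinct rational numbers cannot be closer together than the reciprocal of the product of their denominators. Suppose \eqref{1.1} had two distinct solutions $\alpha_1\ne\alpha_2$, both satisfying $Q\le H(\alpha_i)<Q^{1+\delta/2}$ and both lying on the same side of $\xi$, say $\alpha_1,\alpha_2>\xi$ (the case $\alpha_1,\alpha_2<\xi$ being identical). Write $\alpha_i=x_i/y_i$ with $x_i,y_i$ coprime integers and $y_i>0$, so that $H(\alpha_i)=\max(|x_i|,y_i)\ge y_i$. Both $\alpha_i\ne\xi$ here since each lies strictly on one side of $\xi$, so no special attention to the (irrelevant) possibility $\xi\in\Qq$ is needed.

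First I would bound $|\alpha_1-\alpha_2|$ from below. Since $\alpha_1\ne\alpha_2$, the integer $x_1y_2-x_2y_1$ is nonzero, so
\[
|\alpha_1-\alpha_2|=\frac{|x_1y_2-x_2y_1|}{y_1y_2}\ge\frac{1}{y_1y_2}\ge\frac{1}{H(\alpha_1)H(\alpha_2)}>Q^{-(2+\delta)},
\]
where the final strict inequality uses $H(\alpha_i)<Q^{1+\delta/2}$, hence $H(\alpha_1)H(\alpha_2)<Q^{2+\delta}$.

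Next I would bound $|\alpha_1-\alpha_2|$ from above. Because $\alpha_1$ and $\alpha_2$ lie on the same side of $\xi$, after ordering them one sees at once that their difference is at most the larger of the two distances $|\alpha_i-\xi|$; combining this with \eqref{1.1} and with $H(\alpha_i)\ge Q$ would give
\[
|\alpha_1-\alpha_2|\le\max_{i=1,2}|\alpha_i-\xi|\le\max_{i=1,2}H(\alpha_i)^{-2-\delta}=\Big(\min_{i=1,2}H(\alpha_i)\Big)^{-2-\delta}\le Q^{-(2+\delta)}.
\]
The two displays are incompatible, yielding the desired contradiction and proving the proposition.

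There is essentially no obstacle in this argument; it is a routine Liouville-type estimate. The only points needing a moment's care are the observation that two rationals on the same side of $\xi$ differ by at most the maximum of their two approximation errors (a one-line check after ordering $\alpha_1\ge\alpha_2$), and the bookkeeping of strict versus non-strict inequalities: the strictness $H(\alpha_i)<Q^{1+\delta/2}$ in the lower bound is precisely what is required to clash with the non-strict upper bound coming from $H(\alpha_i)\ge Q$. This is also exactly why the height interval in the statement is taken half-open.
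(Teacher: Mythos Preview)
Your proof is correct and follows essentially the same approach as the paper's: both argue by contradiction, bounding $|\alpha_1-\alpha_2|$ below by $(H(\alpha_1)H(\alpha_2))^{-1}>Q^{-(2+\delta)}$ via the Liouville-type gap between distinct rationals, and above by $\max_i|\xi-\alpha_i|\le Q^{-(2+\delta)}$ via \eqref{1.1} and the same-side condition. The only cosmetic difference is that you spell out the lower bound through the explicit numerators and denominators, whereas the paper states the inequality $(H(\alpha_1)H(\alpha_2))^{-1}\le|\alpha_1-\alpha_2|$ directly.
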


\begin{proof} Suppose for instance that \eqref{1.1} has two solutions
$\alpha_1,\alpha_2$ which are both larger than $\xi$,
and $Q\leq H(\alpha_1)\leq H(\alpha_2)<Q^{1+\delta /2}$ for $i=1,2$.
Then
\begin{eqnarray*}
Q^{-2(1+\delta /2)}&<&
\big(H(\alpha_1)H(\alpha_2)\big)^{-1}\leq |\alpha_1-\alpha_2|
\\
&\leq&\max_i |\xi-\alpha_i|\leq H(\alpha_1)^{-2-\delta}\leq Q^{-2-\delta}
\end{eqnarray*}
which is obviously impossible.
\end{proof}

An immediate consequence of this gap principle is that for any $Q\geq 2$,
$E>1$, inequality \eqref{1.1}
has at most $1+\,2\log E/\log (1+\delta /2)$ solutions $\alpha\in\Qq$
with $Q\leq H(\alpha )<Q^E$. Using this fact in combination with
Proposition \ref{pr:1.2}, the deduction of Theorem \ref{1.1}
is straightforward.
\\[0.15cm]

Also in more advanced situations, the general pattern to obtain explicit
upper bounds for the number of solutions 
of certain Diophantine equations or inequalities, 
is first to prove that
the number of solutions is finite by means of an involved
Diophantine approximation method, and second to estimate from above the number
of solutions using a more or less elementary gap principle.
However, there are also many situations 
where we do have at our disposal a method
to prove finiteness for the number of solutions but where we do not have a gap
principle.
So in these situations we know that there are only finitely many solutions, 
but we are not able to estimate their number.

\section{The quantitative Subspace Theorem}\label{Section2}

We generalize the results from Section \ref{Section1} to higher
dimensions.

Let $n\geq 2$ be an integer.
We denote by $\|\cdot \|$ the maximum norm on $\Rr^n$.
Let
\[
L_i=\alpha_{i1}X_1+\cdots +\alpha_{in}X_n\ \ (i=1\kdots n)
\]
be linearly forms with algebraic coefficients $\alpha_{ij}\in\Cc$
which are linearly independent, that is, their coefficient determinant
$\det (L_1\kdots L_n)=\det (\alpha_{ij})$ is non-zero.
Further, let $\delta>0$ and consider the inequality
\begin{equation}\label{2.1}
|L_1({\bf x})\cdots L_n({\bf x})|
\leq
\|{\bf x}\|^{-\delta}\ \ \mbox{in } {\bf x}\in\Zz^n .
\end{equation}
W. Schmidt's celebrated Subspace Theorem from 1972 (see \cite{Schm72})
states that the set of solutions of \eqref{2.1} lies in a union
of finitely many proper linear subspaces of $\Qq^n$.
In 1989,
Schmidt proved \cite{Schm89} a quantitative result,
which in a slightly modified form reads as follows.
\\[0.2cm]
\emph{Suppose that the algebraic numbers $\alpha_{ij}$ have height at most $H$
and degree at most $D$ and that $0<\delta\leq 1$.
Then the solutions of
\[
|L_1({\bf x})\cdots L_n({\bf x})|
\leq
|\det (L_1\kdots L_n)|\cdot
\|{\bf x}\|^{-\delta}\ \ \mbox{in } {\bf x}\in\Zz^n
\]
with
$\|{\bf x}\|\geq \max (2H,n^{2n/\delta})$ lie in a union of at most
$2^{2^{27n\delta^{-2}}}$ proper linear subspaces of $\Qq^n$.}
\\[0.2cm]
This quantitative result has been improved and generalized in various
directions, mainly due to work of Schlickewei and the author.

We now discuss versions of the Subspace Theorem which involve non-archimedean
absolute values and which take their unknowns from algebraic number fields.
All our algebraic number fields considered below are contained in a given
algebraic closure $\overline{\Qq}$ of $\Qq$.

Let $M_{\Qq}:=
\{\infty\}\cup\{ {\rm primes}\}$ denote the set of places of $\Qq$.
We write
$|\cdot |_{\infty}$ for the ordinary absolute value on $\Qq$ and $|\cdot |_p$
($p$ prime number) for the $p$-adic absolute value, normalized such that
$|p|_p=p^{-1}$. Further, we denote by $\Qq_p$ the completion of $\Qq$ at $p$;
in particular, $\Qq_{\infty}=\Rr$.

Let $K$ be an algebraic number field and denote by $M_K$
the set of places of $K$. To every place $v\in M_K$,
we associate an absolute value $|\cdot |_v$ which is such that if $v$ lies
above $p\in M_{\Qq}$, then the restriction of $|\cdot |_v$ to $\Qq$
is $|\cdot |_p^{[K_v:\Qq_p]/[K:\Qq ]}$, where $K_v$ is the completion
of $K$ at $v$.
The absolute value $|\cdot |_v$ can be continued uniquely to the algebraic
closure $\overline{K_v}$ of $K_v$.
The place $v$ is called finite if $v\nmid\infty$,
infinite if $v|\infty$, real if $K_v=\Rr$ and complex if $K_v=\Cc$.
The absolute values thus chosen satisfy the product formula
$\prod_{v\in M_K} |x|_v=1$ for $x\in K^*$.

We define the height (not the standard definition) of
${\bf x}=(x_1\kdots x_n)\in K^n$ by
\[
H({\bf x}):=\prod_{v\in M_K}\max (1,|x_1|_v\kdots |x_n|_v).
\]

Let $S$ be a finite subset of $M_K$, containing all infinite places.
Denote by $O_S=\{ x\in K:\, |x|_v\leq 1$ for $v\in M_K\setminus S\}$
the ring of $S$-integers.
For $v\in S$, let
\[
L_i^{(v)}=\alpha_{i1}^{(v)}X_1+\cdots +\alpha_{in}^{(v)}X_n\ \ (i=1\kdots n)
\]
be linearly independent linear forms with coefficients
$\alpha_{ij}^{(v)}\in\overline{K_v}$
that are algebraic over $K$.

In 1977, Schlickewei \cite{Schl77}
proved that the set of solutions of the inequality
\begin{equation}\label{2.2}
\prod_{v\in S} |L_1^{(v)}({\bf x})\cdots L_n^{(v)}({\bf x})|_v
\leq H({\bf x})^{-\delta}\ \ \mbox{in } {\bf x}\in O_S^n
\end{equation}
is contained in a union of finitely many proper linear subspaces of
$K^n$.

By an elementary combinatorial argument
(see for instance \cite[Section 21]{ES02}),
one can show that every solution ${\bf x}$ of \eqref{2.2}
satisfies one of a finite number of systems of inequalities
\begin{equation}\label{2.3}
|L_i^{(v)}({\bf x})|_v\leq C_v H({\bf x})^{c_{iv}}\ \ (v\in S,\, i=1\kdots n)
\ \ \mbox{in } \x\in O_S^n
\end{equation}
where $C_v>0$ for $v\in S$ and $\sum_{v\in S}\sum_{i=1}^n c_{iv}<0$.
Thus, an equivalent version of Schlickewei's extension of the Subspace Theorem
is the following result which we state for reference purposes:
\\[0.3cm]
{\bf Theorem A.}
\emph{Suppose $C_v>0$ for $v\in S$ and $\sum_{v\in S}\sum_{i=1}^n c_{iv}<0$.
Then the solutions of \eqref{2.3} lie in finitely
many proper linear subspaces of $K^n$.}
\\[0.2cm]

Put
\begin{eqnarray*}
&&s(v):= 1/[K:\Qq ]\ \ \mbox{if $v$ is real,}\
s(v):= 2/[K:\Qq ]\ \ \mbox{if $v$ is complex,}
\\
&&s(v):=0\ \ \mbox{if $v$ is finite.}
\end{eqnarray*}
The following technical conditions on the linear forms $L_i^{(v)}$,
the constants $C_v$ and the exponents $c_{iv}$ will be kept
throughout:
\begin{equation}\label{2.4}
\left\{
\begin{array}{l}
H(\alpha_{ij}^{(v)})\leq H,\ [K (\alpha_{ij}^{(v)}):K ]\leq D\
\mbox{for $v\in S$, $i,j=1\kdots n$;}
\\[0.2cm]
\displaystyle{\#\bigcup_{v\in S}\{ L_1^{(v)}\kdots L_n^{(v)}\}\leq R} ;
\\[0.2cm]
\displaystyle{0<\prod_{v\in S} C_v\leq \prod_{v\in S}|\det (L_1^{(v)}\kdots L_n^{(v)})|_v^{1/n}};
\\[0.2cm]
\displaystyle{\sum_{v\in S}\sum_{i=1}^n c_{iv}\le -\delta\ \mbox{with }
0<\delta\leq 1};
\\[0.2cm]
\max (c_{1v}\kdots c_{nv})=s(v)\ \mbox{for $v\in S$.}
\end{array}\right.
\end{equation}
The following result is an easy consequence of a general
result of Schlickewei and the author \cite[Theorem 2.1]{ES02}:
\\[0.3cm]
{\bf Theorem B.}
\emph{Assume \eqref{2.4}. Then the set of solutions ${\bf x}\in O_S^n$
of \eqref{2.3}
with
\[
H({\bf x})\geq\max (2H,n^{2n/\delta})
\]
is contained in a union of at most
\[
4^{(n+9)^2}\delta^{-n-4}\log (2RD)\log\log (2RD)
\]
proper linear subspaces of $K^n$.}
\\[0.2cm]
In fact, Schlickewei and the author
proved a more general  ``absolute" version where the unknowns
may be algebraic numbers not necessarily belonging to a
fixed number field.

For applications it is important that the upper bound for the number of
subspaces is independent of the field $K$.
The quantity $R$ may be replaced by $ns$, where
$s$ is the cardinality of $S$. But in many cases,
$R$ can be taken independently of $s$. For instance in applications
to linear equations with unknowns from a finitely generated
multiplicative group and to linear recurrence sequences
(see \cite{Schl96}, \cite{ESS02}, \cite{Schm99})
one has to apply the above Theorem with
$L_i^{(v)}\in\{ X_1\kdots X_n, X_1+\cdots +X_n\}$ for $v\in S$,
$i=1\kdots n$, and in that case, one may take $R=n+1$.

Theorem B was the outcome of a development resulting from Schmidt's
quantitative version of the Subspace Theorem mentioned above
and subsequent improvements
and generalizations by Schlickewei and the author
\cite{Schl92}, \cite{Schl96}, \cite{Ev96}, \cite{ES02}.

The proof of Theorem B is basically a quantification of Schmidt's method
of proof of his Subspace Theorem from 1972 (see \cite{Schm72},
\cite{Schm80}).
It consists of geometry of numbers, a construction of an
auxiliary polynomial, and an application of Roth's Lemma.
In 1994, Faltings and W\"{u}stholz \cite{FW94} gave a totally new proof
of the Subspace Theorem. In their proof they did not use geometry of numbers,
and instead of Roth's Lemma they applied the much more powerful
Faltings' Product Theorem.
Another important ingredient of the proof of Faltings and W\"{u}stholz
is a stability theory for multi-filtered vector spaces.
The method of Faltings and W\"{u}stholz also allows
to compute an upper bound for the number of subspaces containing the
solutions of \eqref{2.3}, but this is much larger than the one from
Theorem B. In fact, in the proof of Faltings and W\"{u}stholz one has
to construct global line bundle sections on products of algebraic varieties
of very large degrees (as opposed to Schmidt's proof where one
encounters only linear varieties)
and this leads to poor estimates for the number of subspaces.

However, the upper bound from Theorem B can be improved
further if one combines ideas from Schmidt's method of proof
with ideas from Faltings and W\"{u}stholz.
Essentially, one may follow Schmidt's method of proof,
but replace Schmidt's construction of an auxiliary polynomial
by that of Faltings and W\"{u}stholz, see Section \ref{Section6}
for more details.

In this way, Ferretti and the author \cite{EF-to-appear} obtained the following.
A solution ${\bf x}$ of \eqref{2.3} is called \emph{large} if
\[
H({\bf x})\geq\max (H,n^{2n/\delta})
\]
and \emph{small} otherwise.

\begin{thm}\label{th:2.1}
Assume \eqref{2.4}. Then the set of large solutions of \eqref{2.3}
lies in a union of at most
\[
10^9 2^{2n}n^{14}\delta^{-3}\log (3\delta^{-1}RD)\cdot
\log (\delta^{-1}\log 3RD)
\]
proper linear subspaces of $K^n$.
\end{thm}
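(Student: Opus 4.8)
The plan is to reproduce one dimension higher the two-part structure behind \thmref{th:1.1}: an \emph{interval result}, proved by the Schmidt--Roth machinery but with the auxiliary polynomial constructed as by Faltings and W\"{u}stholz, together with a \emph{gap principle} limiting the subspaces attached to solutions whose heights lie in a short interval. One works throughout in the parametric setting. Using the last line of \eqref{2.4} one normalizes the exponents $c_{iv}$ (subtracting their average, where $\sum_{v,i}c_{iv}\leq-\delta$ enters) and forms, for a real parameter $Q\geq 1$, a twisted height $H_Q(\x)$ built from the forms $L_i^{(v)}$ and the normalized exponents, arranged so that a large solution $\x$ of \eqref{2.3} satisfies $H_Q(\x)\leq 1$ exactly for $Q\geq H(\x)^{\kappa}$ with $\kappa\asymp\delta/n$. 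Bounding the number of subspaces for the large solutions of \eqref{2.3} then splits into: (i) for fixed $Q$, cover $\{\x\in K^n:H_Q(\x)\leq 1\}$ by few proper subspaces; and (ii) bound how that family varies with $Q$.

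For (i) I would use geometry of numbers: for fixed $Q$ the set $\{H_Q(\x)\leq 1\}$ is essentially a symmetric convex body, and Minkowski's second theorem, Davenport's lemma and the product formula bound its first successive minimum from below and force all but a bounded initial segment of the minima to exceed $1$; hence every $\x$ with $H_Q(\x)\leq 1$ lies in one proper subspace $T_Q\subset K^n$ spanned by the short lattice vectors, apart from a degenerate part swept up by finitely many fixed proper subspaces. The map $Q\mapsto T_Q$ is piecewise constant, so the whole problem reduces to bounding the number of distinct values it takes.

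That bound is the heart of the matter and the main obstacle. Suppose $Q_1<\cdots<Q_m$ with $Q_{j+1}\geq Q_j^{\omega}$, for a suitable $\omega$ bounded polynomially in $n$, $\delta^{-1}$ and $m$, and with $T_{Q_1}\kdots T_{Q_m}$ pairwise distinct; pick $\x_j$ realizing the first minimum for $Q_j$. Following Faltings and W\"{u}stholz rather than Schmidt, one constructs a multihomogeneous auxiliary polynomial $F$ in $m$ blocks of $n$ variables, of moderate degrees $d_1\kdots d_m$ with the products $d_j\log Q_j$ nearly equal, whose existence and whose \emph{large} index along $T_{Q_1}\kdots T_{Q_m}$ are delivered by the stability theory of multi-filtered vector spaces --- it is crucial that only \emph{linear} data are manipulated, keeping the degree and coefficient size of $F$ small, in contrast with the high-degree varieties of the original Faltings--W\"{u}stholz proof. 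Combined with the inequalities $H_{Q_j}(\x_j)\leq 1$ this forces $F$ to have large index at $(\x_1\kdots\x_m)$, while the refined Roth's Lemma of \cite{Ev95} (proved by the techniques behind Faltings' Product Theorem) caps that index --- a contradiction once $m$ exceeds an explicit quantity of the shape $2^{2n}n^{O(1)}\delta^{-2}\log(RD)$. Squeezing this bound on $m$ down to a single power of $\log(RD)$ and a low power of $\delta^{-1}$, rather than their squares or an exponential, is exactly what the Faltings--W\"{u}stholz construction buys; the combinatorial factors ($2^{2n}$, and the power of $n$) come from the number of monomial blocks of $F$ and the pieces of the stability filtration, and the absolute constant from tracking the stability estimates and the Roth's Lemma input.

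It remains to assemble the count via the gap principle. If $\x_1\kdots\x_n$ are linearly independent large solutions of \eqref{2.3} with all $\log H(\x_j)$ in one interval of relative length $1+c\delta/n$, then expanding $\det(\x_1\kdots\x_n)\in O_S\setminus\{0\}$, estimating each $|L_i^{(v)}(\x_j)|_v\leq C_vH(\x_j)^{c_{iv}}$, and using $\sum_{v,i}c_{iv}\leq-\delta$, $\prod_vC_v\leq\prod_v|\det(L_1^{(v)}\kdots L_n^{(v)})|_v^{1/n}$ and the product formula gives $\prod_{v\in S}|\det(\x_1\kdots\x_n)|_v<1$, which is impossible; hence the large solutions with $\log H(\x)$ in such an interval lie in a single proper subspace, with $K$-independent implied constants. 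Covering each interval $[Q_i,Q_i^{\omega})$ of the interval result, and the range below its (enormous but explicit) threshold $C$, by $O(n\delta^{-1}\log\omega)$ and $O(n\delta^{-1}\log\log C)$ such short intervals, the large solutions of \eqref{2.3} are seen to lie in $O\bigl(n\delta^{-1}(m\log\omega+\log\log C)\bigr)$ proper subspaces; since $\log\omega=O(\log(\delta^{-1}\log RD))$ for the optimal degrees, $\log\log C$ is polylogarithmic in $n,\delta^{-1},R,D$, and $m=O(2^{2n}n^{O(1)}\delta^{-2}\log RD)$, substituting and optimizing the constants yields the stated bound $10^9\,2^{2n}n^{14}\delta^{-3}\log(3\delta^{-1}RD)\log(\delta^{-1}\log 3RD)$.
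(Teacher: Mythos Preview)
Your proposal follows essentially the same route as the paper: deduce the interval result (the paper's Theorem~\ref{th:3.1}, proved in \cite{EF-to-appear} via the Schmidt geometry-of-numbers reduction combined with the Faltings--W\"{u}stholz auxiliary polynomial and the refined Roth's Lemma, exactly as you sketch and as Section~\ref{Section6} describes), apply the determinant gap principle (Proposition~\ref{pr:3.2}), and cover each $[Q_i,Q_i^{\omega})$ by $O(n\delta^{-1}\log\omega)$ intervals of relative length $1+\delta/2n$, arriving at $m\cdot O(n\delta^{-1}\log\omega)$ subspaces.

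One point is muddled. There is no ``range below the enormous threshold $C$'' to cover, and no $O(n\delta^{-1}\log\log C)$ term should appear in the count for \emph{large} solutions: by definition a large solution already has $H(\x)\geq\max(H,n^{2n/\delta})$, while the threshold in Theorem~\ref{th:3.1} is $\max(2H,n^{2n/\delta})$; the gap between these is at most a factor of $2$, hence a single extra application of the gap principle (note $Q^{\delta/2n}\geq n\geq 2$ once $Q\geq n^{2n/\delta}$). If your $C$ genuinely depended on $H$, as the Roth-case constant in Proposition~\ref{pr:1.2} does, you would produce a $\log\log H$ contribution that has no place in the bound of Theorem~\ref{th:2.1}; that kind of term belongs to the \emph{small} solutions, Theorem~\ref{th:2.2}. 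Your assertion that ``$\log\log C$ is polylogarithmic in $n,\delta^{-1},R,D$'' is thus inconsistent with the actual threshold. Finally, you should make the exceptional subspace $U_0$ explicit: Theorem~\ref{th:3.1} controls only solutions \emph{outside} $U_0$, and the paper adds one extra subspace for $U_0$ at the end.
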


So compared with Theorem B, the dependence on $n$ has been brought down
from $c^{n^2}$ to $c^n$, while the dependence on $\delta$ has been
improved from $\delta^{-n-4}$ to $\delta^{-3}(\log \delta^{-1})^2$.
With this improvement, the dependence on $\delta$ is almost as good as
that in the quantitative Roth's Theorem from the previous section.
One might still hope for a further improvement in terms of $n$,
for instance to
something polynomial in $n$, but probably this would require a new
method of proof for the Subspace Theorem.

For the small solutions we have the following elementary result
which is proved in Section \ref{Section5} of the present paper.
Here, in contrast to the large solutions, we do get a dependence
on the field $K$.

\begin{thm}\label{th:2.2}
Assume \eqref{2.4}. Let $d:=[K:\Qq ]$.
Then the set of small solutions of \eqref{2.3} lies in a
union of at most
\[
\delta^{-1}\Big( (10^3n)^{nd}+4n\log\log 4H\Big)
\]
proper linear subspaces of $K^n$.
\\[0.1cm]
In the case $K=\Qq$ this bound can be replaced by
\[
\delta^{-1}\Big( 10^{3n} +4n\log\log 4H\Big).
\]
\end{thm}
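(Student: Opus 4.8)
The plan is to split the small solutions into those of small height and those of large height, handling each class with a different elementary device. Recall that a small solution satisfies $H(\x)<\max(H,n^{2n/\delta})$. First I would dispose of the solutions with very small height by a naive counting argument: for solutions $\x\in O_S^n$ with $H(\x)$ bounded by an absolute constant times a power of $n$, one can bound the number of one-dimensional subspaces $K\x$ directly. The key point is that a bound on $H(\x)$ bounds the height of $\x$ as a point of $\Pp^{n-1}(K)$, and by a Northcott-type count the number of such points is at most something like $(cH(\x))^{nd}$ with $d=[K:\Qq]$; taking $H(\x)\le n^{2n/\delta}$ pushed into this estimate, but then observing that these points only matter up to the subspaces they span, gives the $(10^3n)^{nd}$ term (and in the case $K=\Qq$, coprimality of coordinates makes the count $(cn)^n$-type, giving the improved $10^{3n}$). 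Here I would need to be a little careful that the exponent $2n/\delta$ does not creep into the final bound — the trick is that once we are merely counting subspaces, the relevant range of heights that genuinely contributes new subspaces is controlled, and the $\delta^{-1}$ out front absorbs the rest, exactly as in the small-solution bound of Theorem \ref{th:1.1}.

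Second, for the solutions with $n^{2n/\delta}\le H(\x)<\max(H,n^{2n/\delta})$ — which is only non-empty when $H>n^{2n/\delta}$, so here $H(\x)<H$ — I would use a gap principle in the spirit of Proposition \ref{pr:1.3}, but phrased for subspaces rather than points. The idea is that if $\x_1,\x_2$ are two solutions of \eqref{2.3} with comparable heights, $Q\le H(\x_j)<Q^{1+\delta/(2n)}$ say, then the products $\prod_v|L_i^{(v)}(\x_j)|_v$ are all small (by \eqref{2.3} and the normalization $\sum c_{iv}\le-\delta$), and one estimates the height of a wedge $\x_1\wedge\x_2$, or equivalently the $(n-1)$-dimensional subspace through $\x_1,\x_2$ and any fixed collection of other basis vectors, from below by the product formula and from above using that both points nearly lie on the "small" locus of each $L_i^{(v)}$. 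Comparing the two estimates forces $\x_1$ and $\x_2$ to span the same subspace (or to lie in a common subspace of dimension $<n$), so each dyadic-type block $[Q,Q^{1+\delta/(2n)})$ contributes boundedly many subspaces. Since $H(\x)$ runs over $[n^{2n/\delta},H)$, the number of such blocks is $O\big(\tfrac{\log\log H}{\log(1+\delta/(2n))}\big)=O(\delta^{-1}n\log\log H)$, which is the $4n\delta^{-1}\log\log 4H$ term.

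Adding the two contributions and absorbing constants gives the stated bound
$\delta^{-1}\big((10^3n)^{nd}+4n\log\log 4H\big)$, with the refinement over $\Qq$ coming solely from the sharper Northcott count in the first step. The main obstacle I anticipate is the gap principle in the higher-dimensional, multi-valued setting: unlike in Proposition \ref{pr:1.3}, where "$\alpha>\xi$ versus $\alpha<\xi$" cleanly separates solutions into two classes with one per block, here one must argue at the level of subspaces, and the bookkeeping over all places $v\in S$ and all forms $L_i^{(v)}$ — together with extracting, from the condition $\max_i c_{iv}=s(v)$ in \eqref{2.4}, the fact that the "smallest" $L_i^{(v)}$-values control the geometry — is where the real work lies. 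Everything else (the Northcott estimate, summing a geometric-type series of block counts, tracking the numerical constants) is routine.
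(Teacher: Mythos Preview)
Your treatment of the solutions with $n^{2n/\delta}\le H(\x)<H$ is essentially the paper's Proposition~\ref{pr:3.2}: a determinant/product-formula argument shows that all solutions in a block $[Q,Q^{1+\delta/2n})$ lie in a single proper subspace, and covering $[n^{2n/\delta},H)$ needs at most $4n\delta^{-1}\log\log 4H$ blocks. That part is fine.

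The treatment of the very small solutions, however, has a genuine gap. A Northcott count on $\Pp^{n-1}(K)$ gives on the order of $B^{nd}$ points of height at most $B$; with $B=n^{2n/\delta}$ this is $n^{2n^2d/\delta}$, which carries $\delta^{-1}$ in the \emph{exponent}, not as a harmless factor out front. Your remark that ``the relevant range of heights that genuinely contributes new subspaces is controlled'' does not rescue this: each projective point already \emph{is} a one-dimensional subspace, so passing from points to subspaces saves nothing. The same objection applies to the $K=\Qq$ case --- coprimality of coordinates does not bring the count of primitive integer vectors of height $\le n^{2n/\delta}$ down to $(cn)^n$; it is still of size $n^{2n^2/\delta}$.

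What the paper does instead is prove a second gap principle (Proposition~\ref{pr:3.3}) valid for \emph{all} $Q\ge 1$: the solutions with $Q\le H(\x)<2Q^{1+\delta/2n}$ lie in at most $(90n)^{nd}$ proper subspaces (and at most $200^n$ when $K=\Qq$). The mechanism is not a point count but a geometric partition: one partitions $\Cc^n$ into at most $(20n)^nM^2$ cells so that any $n$ vectors from the same cell satisfy $|\det(\y_1,\ldots,\y_n)|\le M^{-1}\|\y_1\|\cdots\|\y_n\|$ (Lemma~\ref{le:3.5}), applies this simultaneously at each of the at most $d$ archimedean places to the renormalised vectors $\big(Q^{-c_{iv}/s(v)}L_i^{(v)}(\x)\big)_i$, and then the product formula forces $\det(\x_1,\ldots,\x_n)=0$ for any $n$ solutions in the same cell. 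This is exactly where the bound $(90n)^{nd}$, independent of $Q$ and of $\delta$, comes from. Covering $[1,n^{2n/\delta})$ then takes $B\le 4n\delta^{-1}\log(3\log n)$ blocks, and $B\cdot(90n)^{nd}\le\delta^{-1}(10^3n)^{nd}$. For $K=\Qq$ the refinement to $200^n$ per block is obtained not via coprimality but via a lattice argument (Lemmas~\ref{le:3.3}--\ref{le:3.4}) bounding the number of subspaces in terms of $\prod_v D_v$ where $D_v$ bounds $|\det(\x_1,\ldots,\x_n)|_v$.

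In short: the missing idea is a gap principle that works down to $Q=1$ and yields a subspace bound independent of $\delta$; a Northcott count cannot do this.
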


It is an open problem whether the bounds in Theorem \ref{th:2.2}
can be replaced by something depending only polynomially on $n$ and/or $d$.
Recent work by Schmidt \cite{Schm09} on Roth's Theorem over number fields
suggests that a polynomial dependence on $d$ should be possible.
  
\section{A refinement of the Subspace Theorem and an interval result}
\label{Section3}

We keep the notation and assumptions from the previous section.
So $K$, $S$, $L_i^{(v)}$ ($v\in S$, $i=1\kdots n$), $\delta$,
have the same meaning as before, and they satisfy \eqref{2.4}.
The following refinement of the Subspace Theorem
follows from work of Faltings and W\"{u}stholz \cite{FW94}
and Vojta \cite{Voj89} but there is a heavy overlap with ideas
of Schmidt \cite{Schm93}.
\\[0.3cm]
{\bf Theorem C.}
\emph{
There is a proper linear subspace $U_0$ of $K^n$, such that
\eqref{2.3} has only finitely many solutions outside $U_0$.
\\[0.1cm]
This space $U_0$ can be determined effectively. Moreover, it can be chosen
from a finite collection, which depends only on the linear forms
$L_i^{(v)}$ ($v\in S$, $i=1\kdots n$) and is independent of the constants
$C_v$ and the exponents $c_{iv}$.}
\\[0.2cm]
The first part giving the mere existence of $U_0$ is Theorem 9.1 of \cite{FW94}.
The second part follows from \cite{Voj89}.
\\[0.15cm]

We first give a description of the space $U_0$ occurring
in Theorem 9.1 of \cite{FW94}, where we have translated 
Faltings' and W\"{u}stholz' terminology into ours.
Let $v\in M_K$.
Two linear forms $L=\sum_{i=1}^n \alpha_i X_i$ and $M=\sum_{i=1}^n \beta_i X_i$
with coefficients in $\overline{K_v}$ are said to be conjugate over $K_v$
if there is an automorphism $\sigma$
of $\overline{K_v}$ over $K_v$ such that $\sigma (\alpha_i )=\beta_i$
for $i=1\kdots n$.
Given $v\in M_K$ and a system of linear forms $L_1\kdots L_r$
with coefficients in $\overline{K_v}$, this system is called
$v$-symmetric if with any linear form in the system, also all its conjugates
over $K_v$ belong to this system.

Given a linear subspace $U$ of $K^n$ and
linear forms $L_1\kdots L_r$ with coefficients generating a field extension $F$
of $K$, we say that $L_1\kdots L_r$ are linearly independent on $U$
if there is no non-trivial linear combination of $L_1\kdots L_r$
with coefficients in $F$ that vanishes identically on $U$.

For each $v\in S$, we obtain a $v$-symmetric system 
$L_1^{(v)}\kdots L_{n_v}^{(v)}$,
consisting of the linear
forms $L_1^{(v)}\kdots L_n^{(v)}$ from \eqref{2.3} and their conjugates
over $K_v$.
Using $|L({\bf x})|_v=|M({\bf x})|_v$
for any ${\bf x}\in K^n$ and any linear forms $L,M$ with coefficients
in $\overline{K_v}$ which are
conjugate over $K_v$, we see that \eqref{2.3}
is equivalent to the system of inequalities
\begin{equation}\label{3.1}
|L_i^{(v)}({\bf x})|_v\leq C_v H({\bf x})^{c_{iv}}\ (v\in S,\, i=1\kdots n_v)
\ \ \mbox{in } {\bf x}\in O_S^n .
\end{equation}

Now for any linear subspace $U$ of $K^n$ and any $v\in S$,
define $\nu_v(U)=0$ if $U=(0)$ and
\[
\nu_v(U):=\min c_{i_1,v}+\cdots +c_{i_u,v}
\]
if $U\not= (0)$, where $u=\dim U$,
and the minimum is taken over all subsets $\{ i_1\kdots i_u\}$
of $\{ 1\kdots n_v\}$ of cardinality $u$
such that $L_{i_1}^{(v)}\kdots L_{i_u}^{(v)}$
are linearly independent on $U$. Further, define
\[
\nu (U) :=\sum_{v\in S} \nu_v(U),
\]
and, if $U\not=K^n$,
\[
\mu (U) := \frac{\nu (K^n)-\nu (U)}{n-\dim U}.
\]
Let $\mu_0$ be the mimimum of the quantities $\nu (U)$, taken over all
proper linear subspaces $U$ of $K^n$.

Now one can show that there is a unique proper linear subspace $U_0$ of $K^n$,
which is the one from Theorem C, such that
\begin{equation}\label{3.2}
\left\{
\begin{array}{l}
\mu (U_0)=\mu_0;
\\
U_0\subseteq U\ \mbox{for every linear subspace
$U$ of $K^n$ with $\mu (U)=\mu_0$.}
\end{array}
\right.
\end{equation}

It is important to remark, that
Theorem C can be deduced from the apparently weaker
Theorem A. The argument is roughly as follows.
First assume that $U_0=({\bf 0})$.
(In this case, following the terminology of Faltings and W\"{u}stholz,
system \eqref{2.3} is called \emph{semistable}.)
This assumption implies that if $U$ is any linear subspace of $K^n$ 
of dimension at least $2$, 
then Theorem A is applicable to the restriction of \eqref{2.3} to $U$,
and thus, the solutions of \eqref{2.3} in $U$
lie in a finite union of proper linear subspaces of $U$.
Now by induction, it follows easily that \eqref{2.3} has only finitely
many solutions.

If $U_0\not=({\bf 0})$, one may derive from \eqref{2.3} a semistable
system of inequalities, with solutions from the quotient vector
space $K^n/U_0$. We infer that the solutions of \eqref{2.3} outside $U_0$
lie in finitely many cosets modulo $U_0$.
Then one completes the proof by showing 
that each coset contains only finitely many solutions.

The space $U_0$ can be determined effectively in principle
using a combinatorial algorithm based on ideas of Vojta \cite{Voj89}.
In fact,
let $M_1\kdots M_t$ be the conjugates in $\overline{\Qq}[X_1\kdots X_n]$ 
of the linear
forms $L_i^{(v)}$ ($v\in S$, $i=1\kdots n$).
Let $F$ be the extension of $K$ generated by the coefficients of
$M_1\kdots M_t$. 
Define the $F$-vector spaces
$H_i:=\{ {\bf x}\in L^n:\, M_i({\bf x})=0\}$ ($i=1\kdots t$).
From ideas of Vojta \cite{Voj89} it follows that $U_0\otimes_K F$ 
can be obtained by an algorithm taking as input
the spaces $H_1\kdots H_t$ and applying repeatedly the operations
$+$ (sum of two vector spaces) and $\cap$ (intersection) to two
previously obtained spaces. The number of steps of this algorithm
is bounded above effectively in terms of $t$ only.
 
Alternatively, from an auxiliary result in \cite{EF-to-appear} 
it follows that $U_0$ has a basis, consisting of vectors 
of which the coordinates 
have heights at most $\big( \sqrt{n}H\big)^{4^n}$,
where $H$ is given by \eqref{2.4}.

The special case that $L_i^{(v)}\in\{ X_1\kdots X_n,\, X_1+\cdots +X_n\}$ 
for $v\in S$, $i=1\kdots n$ is of particular importance for applications.
It is shown in \cite{EF-to-appear} that in this case we have
\[
U_0=\left\{ {\bf x}=(x_1\kdots x_n)\in K^n:\, \sum_{j\in I_i} x_j=0\ 
\mbox{for } i=1\kdots t\right\}
\]
where $I_1\kdots I_t$ are certain pairwise disjoint subsets of
$\{ 1\kdots n\}$.
\\[0.15cm]

The solutions of \eqref{2.3} outside $U_0$ can not be determined effectively.
Moreover, it is also beyond reach to estimate the number of solutions
outside $U_0$. But Ferretti and the author \cite{EF-to-appear} 
proved the following more precise
version of Theorem C which may be considered as an analogue
of the interval result Proposition \ref{pr:1.2}.

\begin{thm}\label{th:3.1}
Assume \eqref{2.4}. Put
\[
m:=\big[ 10^8 2^{2n}n^{14}\delta^{-2}\log (3\delta^{-1}RD)\big],\ \ \
\omega := 3n\delta^{-1}\log 3RD .
\]
Then there are reals $Q_1\kdots Q_m$ with
\[
\max (2H, n^{2n/\delta})\leq Q_1<Q_2<\cdots <Q_m
\]
such that for every solution ${\bf x}\in O_S^n$ of \eqref{2.3} outside $U_0$
we have
\[
H({\bf x})<\max (2H,n^{2n/\delta})\ \mbox{or }
H({\bf x})\in\bigcup_{i=1}^m\big[ Q_i,Q_i^{\omega}\big).
\]
\end{thm}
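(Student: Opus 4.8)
The plan is to prove \thmref{th:3.1} as a quantitative form of the Subspace Theorem, following Schmidt's method of proof --- geometry of numbers together with an auxiliary polynomial, cf.\ \cite{Schm72}, \cite{Schm80} --- but, as announced in \secref{Section3}, with Schmidt's construction of the auxiliary polynomial replaced by that of Faltings and W\"{u}stholz \cite{FW94}, and with everything arranged so as to count. The reals $Q_1\kdots Q_m$ would be produced greedily: put $Q_1:=\inf H(\x)$, the infimum over all $\x\in O_S^n$ satisfying \eqref{2.3} with $\x\notin U_0$ and $H(\x)\ge\max(2H,n^{2n/\delta})$; once $Q_i$ has been chosen, put $Q_{i+1}:=\inf H(\x)$ over the same solutions subject to the extra condition $H(\x)\ge Q_i^{\omega}$, and stop as soon as this set of solutions is empty, padding the list with further increasing values if this happens before step $m$. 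Each infimum is attained, as $O_S^n$ has only finitely many points of bounded height. If the process does \emph{not} stop at step $m$, there are solutions $\x_1\kdots\x_{m+1}$ of \eqref{2.3}, none lying in $U_0$, with $H(\x_1)\ge\max(2H,n^{2n/\delta})$ and $H(\x_{i+1})\ge H(\x_i)^{\omega}$ for $i=1\kdots m$, and it suffices to derive a contradiction from such a rapidly increasing chain.

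For the geometry-of-numbers input, fix a real $Q\ge\max(2H,n^{2n/\delta})$ and work with the $v$-symmetric systems $L_1^{(v)}\kdots L_{n_v}^{(v)}$ and the exponents $c_{iv}$ from \eqref{3.1}. Attach to $Q$ the ``twisted'' convex body cut out by the inequalities $|L_i^{(v)}(\x)|_v\le C_vQ^{c_{iv}}$ over $v\in S$, $i=1\kdots n_v$, and let $\lambda_1(Q)\le\cdots\le\lambda_n(Q)$ be its successive minima, in the adelic sense needed to treat all $v\in S$ simultaneously; by Minkowski's second theorem the product $\lambda_1(Q)\cdots\lambda_n(Q)$ equals $Q^{\nu(K^n)}$ up to a factor negligible against $Q^{\delta}$. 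A solution $\x$ of \eqref{2.3} with $H(\x)$ near $Q$ then has twisted height at most $H(\x)^{-\delta}$, again up to a negligible factor, and so lies near the bottom of the flag of minima. Here $U_0$ enters as a destabilizing subspace: the relevant minimum can be forced down to $Q^{-\delta'}$, for a suitable $\delta'>0$, only if $\x$ is pushed into $U_0$, so for $\x\notin U_0$ one has $\lambda_1(Q)$ essentially $\ge 1$ and $\x$ is, up to a bounded factor, a lattice point realizing the first minima. To each such $\x$ I would attach a \emph{type}, recording the flag generated by those minima-directions relative to the filtrations of $\overline{K_v}^n$ defined by the $L_i^{(v)}$; the number $T$ of types, together with the number $m_0$ of variables and the degrees occurring in the auxiliary polynomial below, is bounded in terms of $n$, $\delta$, $R$, $D$ by a quantity involving a factor $2^{2n}$, a fixed power of $n$, a factor $\delta^{-2}$ and $\log(3\delta^{-1}RD)$. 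Applying the pigeonhole principle to $\x_1\kdots\x_{m+1}$, with $m$ chosen as in the statement so as to exceed $T\cdot(m_0-1)$, yields at least $m_0$ of the $\x_i$ of a single type, say $\y_1\kdots\y_{m_0}$, still with $H(\y_{j+1})\ge H(\y_j)^{\omega}$.

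The heart of the matter is a \emph{no-long-chain lemma}: once a type is fixed, there is no $m_0$-tuple of solutions $\y_1\kdots\y_{m_0}$ of \eqref{2.3} of that type with $H(\y_{j+1})\ge H(\y_j)^{\omega}$ and $\omega=3n\delta^{-1}\log 3RD$. To prove it, choose multidegrees $d_1\kdots d_{m_0}$ with the products $d_j\log H(\y_j)$ all nearly equal, and use the Faltings--W\"{u}stholz construction \cite{FW94} (see also \secref{Section6}) to produce a multihomogeneous polynomial $F$ of these degrees, with rational integer coefficients of controlled height, whose index with respect to suitable weights at the point $(\y_1\kdots\y_{m_0})$ is at least a definite fraction of $d_1+\cdots+d_{m_0}$. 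Evaluating $F$ at every place of $K$, using the inequalities \eqref{2.3} together with the common type to control the sizes of $L_i^{(v)}(\y_j)$, and invoking the product formula, one forces this index to be still larger; finally the refined form of Roth's Lemma from \cite{Ev95}, proved by the methods of Faltings' Product Theorem \cite{Falt91}, bounds the index from above. For the stated $\omega$ and a suitable $m_0$ of the indicated size the lower and upper bounds become incompatible --- the desired contradiction --- and tracking the constants and multiplying by $T$ produces the value of $m$ in the statement.

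The step I expect to be the main obstacle is the no-long-chain lemma, and within it the bookkeeping in the Faltings--W\"{u}stholz construction: it is precisely the replacement of Schmidt's auxiliary polynomial by theirs that brings the dependence on $n$ down from the $c^{n^2}$ of Theorem~B to $2^{2n}$, so the delicate point is to carry out that construction with explicit control of the multidegrees, of the heights of the coefficients and of the index, and then to match it against the explicit refined Roth's Lemma so that $\omega$ comes out exactly $3n\delta^{-1}\log 3RD$ and $m$ exactly as stated. By comparison, the geometry-of-numbers estimate for the number of types, the verification that outside $U_0$ the destabilizing minimum cannot degenerate, and, where needed, the reduction of a general configuration to the semistable case along the lines sketched in \secref{Section3}, are more routine, though they still require care to keep every parameter independent of the field $K$.
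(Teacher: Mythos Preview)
Your outline has the right overall architecture --- greedy choice of the $Q_i$, a rapidly growing chain of solutions, an auxiliary polynomial, and a non-vanishing lemma --- but it omits the step that actually makes Schmidt's method work in dimension $n>2$, namely the passage to exterior powers. As explained in \secref{Section6}, one does \emph{not} apply the polynomial machinery to the points ${\bf x}_h$ directly. Instead, from the convex body $\Pi({\bf x})$ one locates an index $k$ with $\lambda_k/\lambda_{k+1}\ll\|{\bf x}\|^{-\delta/n}$ and passes to a body $\widehat{\Pi}({\bf x})$ in $\wedge^{n-k}\Rr^n\cong\Rr^N$, $N=\binom{n}{k}$, arranged so that its penultimate minimum is $\ll 1$ while its last minimum is $\gg\|{\bf x}\|^{\delta/2n^2}$; the lattice points of $\widehat{\Pi}({\bf x})$ then span a \emph{hyperplane} $T({\bf x})\subset\Qq^N$. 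The auxiliary polynomial is built in blocks of $N$ variables and is shown to vanish identically on $T({\bf x}_1)\times\cdots\times T({\bf x}_M)$, and the contradiction comes from Schmidt's extension of Roth's Lemma for products of hyperplanes, not from the point-version of Roth's Lemma in \cite{Ev95}. Your proposal to build $F$ with large index at the single point $({\bf y}_1\kdots{\bf y}_{m_0})$ and then appeal to Roth's Lemma does not go through in $(\Pp^{n-1})^{m_0}$: that is precisely the difficulty the exterior-power construction was invented to circumvent. Relatedly, your claim that ``for ${\bf x}\notin U_0$ one has $\lambda_1(Q)$ essentially $\geq 1$'' is not correct; a solution of \eqref{2.3} lies in the body, so $\lambda_1\leq 1$, and the role of $U_0=({\bf 0})$ is the subtler semistability statement that the restriction of \eqref{2.3} to any proper subspace is again a system to which Theorem~A applies, which is what allows the reduction to the quotient $K^n/U_0$.

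A second, smaller point: your introduction of ``types'' and the pigeonhole step runs counter to the very reason for using the Faltings--W\"{u}stholz polynomial. In the proof of Theorem~B one had to partition the solutions into classes with $e_i({\bf x})\approx e_i({\bf x}')$, and the number of such classes is what produced the $c^{n^2}\delta^{-cn}$ loss. The point of the Faltings--W\"{u}stholz construction, with its free weights $\alpha_{hj}$ adapted to the individual exponents $e_i({\bf x}_h)$, is that the argument goes through for solutions ${\bf x}_1\kdots{\bf x}_M$ \emph{not} in the same class, so no pigeonholing over types is needed. If you reinstate a type classification you will not recover the stated value of $m$.
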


In \cite{EF-to-appear} we proved a more general absolute result where the
unknowns are taken from $\overline{\Qq}$ instead of $K$.

\section{Gap principles}\label{Section5}

In this section we state and prove two gap principles.
Further, we deduce Theorems \ref{th:2.1} and \ref{th:2.2}.
Theorem \ref{th:2.1} is a consequence of Theorem \ref{th:3.1} and our
first gap principle, while Theorem \ref{th:2.2} follows from our
second gap principle.

We keep the notation introduced before. Further, we put
\[
\Delta_v:=|\det (L_1^{(v)}\kdots L_n^{(v)})|_v\ \ \mbox{for $v\in S$.}
\]

We state our first gap principle. This result is well-known but
we have included a proof for convenience of the reader.

\begin{prop}\label{pr:3.2}
Assume \eqref{2.4}. Let $Q\geq  n^{2n/\delta}$. Then the set of solutions
${\bf x}\in O_S^n$ of \eqref{2.3} with
\[
Q\leq \|{\bf x}\|<Q^{1+\delta /2n}
\]
is contained in a single proper linear subspace of $K^n$.
\end{prop}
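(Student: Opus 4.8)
The plan is to run the classical determinant/product‑formula gap argument. Suppose the conclusion fails. Then the solutions $\mathbf{x}\in O_S^n$ of \eqref{2.3} in the stated range are not all contained in one proper subspace, so they span $K^n$ and we may pick among them $n$ linearly independent vectors $\mathbf{x}_1,\ldots,\mathbf{x}_n$, each satisfying $Q\leq H(\mathbf{x}_j)<Q^{1+\delta/2n}$. Put $D:=\det(\mathbf{x}_1,\ldots,\mathbf{x}_n)$. Since the $\mathbf{x}_j$ lie in $O_S^n$ and are linearly independent over $K$, $D$ is a nonzero $S$-integer, whence $|D|_v\leq 1$ for $v\notin S$ and the product formula gives
\[
\prod_{v\in S}|D|_v\geq 1 .
\]
The aim is to contradict this by an upper bound coming from \eqref{2.3}.

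Fix $v\in S$. Writing $A^{(v)}$ for the coefficient matrix of $L_1^{(v)},\ldots,L_n^{(v)}$ and $X$ for the matrix with columns $\mathbf{x}_1,\ldots,\mathbf{x}_n$, one has $\bigl(L_i^{(v)}(\mathbf{x}_j)\bigr)_{i,j}=A^{(v)}X$, so $|D|_v=\Delta_v^{-1}\bigl|\det\bigl(L_i^{(v)}(\mathbf{x}_j)\bigr)\bigr|_v$. I would then expand this determinant over the $n!$ permutations and estimate the sum — ultrametrically at finite $v$, by the triangle inequality at infinite $v$, which in the present normalisation costs only a factor $(n!)^{s(v)}$ (so nothing at finite $v$) — and insert the inequalities \eqref{2.3}, obtaining
\[
|D|_v\leq \Delta_v^{-1}(n!)^{s(v)}C_v^{\,n}\max_\sigma\prod_{i=1}^n H(\mathbf{x}_{\sigma(i)})^{c_{iv}},
\]
the maximum over permutations $\sigma$ of $\{1,\ldots,n\}$. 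This is the one place where the shortness of the interval enters: since $Q\leq H(\mathbf{x}_j)<Q^{1+\delta/2n}$ for all $j$ and $c_{iv}\leq\max_i c_{iv}=s(v)$ by \eqref{2.4}, each factor satisfies $H(\mathbf{x}_{\sigma(i)})^{c_{iv}}\leq Q^{c_{iv}}Q^{(\delta/2n)s(v)}$, so for every $\sigma$ the product over $i$ is at most $Q^{\sum_i c_{iv}}Q^{(\delta/2)s(v)}$.

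Multiplying these estimates over $v\in S$ and invoking the two remaining conditions of \eqref{2.4}, namely $\prod_{v\in S}C_v^{\,n}\leq\prod_{v\in S}\Delta_v$ and $\sum_{v\in S}\sum_{i=1}^n c_{iv}\leq-\delta$, together with $\sum_{v\in S}s(v)=1$ (as $S$ contains all infinite places), I get
\[
1\leq\prod_{v\in S}|D|_v\leq n!\cdot Q^{-\delta}\cdot Q^{\delta/2}=n!\,Q^{-\delta/2}.
\]
Because $n\geq 2$ yields $n!<n^n$, this forces $Q^{\delta/2}<n^n$, i.e.\ $Q<n^{2n/\delta}$, contradicting $Q\geq n^{2n/\delta}$; the proposition follows. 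I do not expect any genuine obstacle, as the argument is entirely standard; the only delicate points are the bookkeeping of the archimedean normalising factors $(n!)^{s(v)}$, which collapses to $n!$ precisely because $\sum_{v\in S}s(v)=1$, and verifying that the interval exponent $\delta/2n$ is small enough that the archimedean loss $Q^{\delta/2}$ is still beaten by the gain $Q^{-\delta}$ coming from the negativity of $\sum_{v,i}c_{iv}$.
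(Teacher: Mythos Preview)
Your argument is correct and follows essentially the same route as the paper: change of basis to $(L_i^{(v)})$, bound $|\det(L_i^{(v)}(\mathbf{x}_j))|_v$ at each place, use $c_{iv}\leq s(v)$ together with the short interval to replace each $H(\mathbf{x}_j)^{c_{iv}}$ by $Q^{c_{iv}+s(v)\delta/2n}$, multiply over $v$, and conclude via the product formula. The only cosmetic differences are that the paper uses Hadamard's inequality (yielding $n^{n/2}$) instead of the permutation expansion (yielding $n!$), and shows directly that any $n$ solutions have zero determinant rather than phrasing it as a contradiction; both lead to the same conclusion since $n!<n^n\leq Q^{\delta/2}$.
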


\begin{proof}
Let $\mathcal{T}$ denote the set of solutions ${\bf x}\in O_S^n$
to \eqref{2.3} with
\[
Q\leq H({\bf x})<Q^{1+\delta /2n}\ \mbox{for } i=1\kdots n.
\]
Notice that for ${\bf x}\in\mathcal{T}$ we have, by the last condition
of \eqref{2.4},
\begin{equation}\label{3.12}
|L_i^{(v)}({\bf x})|_v\leq C_vH({\bf x})^{s(v)+(c_{iv}-s(v))}
\leq C_vQ^{c_{iv}+s(v)\delta /2n}\ \mbox{for $i=1\kdots n$.}
\end{equation}
Take ${\bf x}_1\kdots {\bf x}_n\in\mathcal{T}$.
First let $v$ be an infinite place of $K$. Then $|\cdot |_v$ can be extended to
$\overline{K_v}=\Cc$ and for this extension we have
$|\cdot |_v=|\cdot |^{s(v)}$. Now by Hadamard's inequality,
\begin{eqnarray}
\label{3.13}
|\det ({\bf x}_1\kdots {\bf x}_n)|_v &=&\Delta_v^{-1}\cdot
|\det \big(L_i^{(v)}({\bf x}_j)\big)_{i,j}|_v
\\
\nonumber
&\leq& (n^{n/2})^{s(v)}\Delta_v^{-1}
\prod_{i=1}^n\max_{j=1}^n |L_i^{(v)}({\bf x}_j)|_v
\\
\nonumber
&\leq& (n^{n/2})^{s(v)}\Delta_v^{-1}C_v^n
Q^{(\sum_{i=1}^n c_{iv})+s(v)\delta /2}.
\end{eqnarray}
For finite $v\in S$ we have by a similar argument, but now using
$s(v)=0$ and the ultrametric inequality instead of Hadamard's inequality,
\begin{equation}\label{3.20}
|\det ({\bf x}_1\kdots {\bf x}_n)|_v\leq
\Delta_v^{-1}C_v^n
Q^{\sum_{i=1}^n c_{iv}},
\end{equation}
while for the places $v$ outside $S$ we have, trivially,
\begin{equation}\label{3.21}
|\det ({\bf x}_1\kdots {\bf x}_n)|_v \leq 1.
\end{equation}
Now taking the product over $v\in M_K$ and using \eqref{2.4},
$\sum_{v|\infty} s(v)=1$, \eqref{3.13}--\eqref{3.21}
and our assumption $Q>n^{2n/\delta}$
we obtain
\begin{eqnarray*}
\prod_{v\in M_K} |\det ({\bf x}_1\kdots {\bf x}_n)|_v&\leq&
n^{n/2}\prod_{v\in S} (\Delta_v^{-1}C_v^n)\cdot
Q^{(\delta /2)+\sum_{v\in S}\sum_{i=1}^n c_{iv}}
\\
&\leq&  n^{n/2}Q^{-\delta /2}<1,
\end{eqnarray*}
and so, $\det ({\bf x}_1\kdots {\bf x}_n)=0$ by the product formula.
Hence ${\bf x}_1\kdots {\bf x}_n$ are linearly dependent.
This holds for arbitrary ${\bf x}_1\kdots {\bf x}_n\in\mathcal{T}$.
Therefore, $\mathcal{T}$ is contained in a single proper linear subspace of $K^n$.
\end{proof}

\begin{proof}[Proof of Theorem \ref{th:2.1}]
According to Theorem \ref{th:3.1}, for the large solutions ${\bf x}$
of \eqref{2.3} outside $U_0$ we have
$H({\bf x})\in \mathcal{U}:=\bigcup_{i=1}^m [Q_i,Q_i^{\omega})$.
We have to cover $\mathcal{U}$ by intervals of the shape
$[Q,Q^{1+\delta /2n})$ and then apply Proposition \ref{pr:3.2}.
It is not difficult to show that $\mathcal{U}$
is contained in a union of not more than
\[
m\left( 1+\left[ \frac{\log \omega}{\log (1+\delta /2n )}\right]\right)
\]
intervals of the shape $[Q,Q^{1+\delta /2n})$.
By Proposition \ref{pr:3.2}, this quantity, with one added to it
to take care of the space $U_0$, is then an upper bound for the number of
subspaces containing the large solutions of \eqref{2.3}.
This is bounded above by the quantity in Theorem \ref{th:2.1}.
\end{proof}

We now deduce a gap principle to deal with the small solutions of
\eqref{2.3} which is more intricate than the one deduced above.

\begin{prop}\label{pr:3.3}
Let $d:= [K:\Qq ]$ and $Q\geq 1$.
Then the set of solutions
${\bf x}\in O_S^n$ of \eqref{2.3} with
\[
Q\leq H({\bf x})< 2Q^{1+\delta /2n}
\]
is contained in a union of at most
\[
(90n)^{nd}
\]
proper linear subspaces of $K^n$.
\\[0.1cm]
If $K=\Qq$ this upper bound can be replaced by
\[
200^n.
\]
\end{prop}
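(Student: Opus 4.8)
The plan is to estimate the number of subspaces needed to cover the solutions lying in a "dyadic" height block $Q \le H(\mathbf{x}) < 2Q^{1+\delta/2n}$, where now $Q$ may be as small as $1$, so the clean determinant argument of Proposition \ref{pr:3.2} (which relied on $Q > n^{2n/\delta}$) is no longer available. The idea is that when the height window is small, the solutions cannot be controlled by a single linear relation, but they \emph{can} be controlled by counting how many essentially different vectors of bounded height exist, and then grouping them.

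First I would reduce to counting. A solution $\mathbf{x}\in O_S^n$ of \eqref{2.3} with $H(\mathbf{x}) < 2Q^{1+\delta/2n}$ has all its archimedean conjugates bounded: writing $d=[K:\Qq]$ and using that $H(\mathbf{x})=\prod_{v\in M_K}\max(1,|x_1|_v\kdots|x_n|_v)$ together with the fact that $\mathbf{x}\in O_S^n$ forces $|x_i|_v\le 1$ at the finite places outside $S$, one sees that at each infinite place the quantity $\max_i|x_i|_v$ is bounded by a power of $H(\mathbf{x})$. The key point, however, is projective: I want to bound the number of \emph{one-dimensional subspaces} $K\mathbf{x}$ spanned by such solutions. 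Two solutions $\mathbf{x},\mathbf{x}'$ spanning the same line contribute the same subspace, so it suffices to bound the number of lines. The number of points of bounded height in $\Pp^{n-1}(K)$ is essentially $H^{n d}$ up to constants (a Schanuel-type count), but we do not want the dependence on $H$ — that is exactly why we restrict to a short window and to $Q$ small. When $Q$ is of moderate size the window $[Q, 2Q^{1+\delta/2n})$ is so short that the determinant argument of the previous proposition still "almost" works; the honest content of this proposition is the range where $Q$ is bounded, say $Q \le n^{2n/\delta}$, in which case $2Q^{1+\delta/2n}$ is bounded by an explicit constant and we are genuinely counting lattice points.

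The main step, then, is a geometry-of-numbers / elementary counting argument: bound the number of primitive vectors $\mathbf{x}\in O_S^n$ with $H(\mathbf{x})$ below an explicit bound $B$ (coming from $2Q^{1+\delta/2n}$ with $Q\le n^{2n/\delta}$, absorbed using $0<\delta\le 1$), grouping them so that each group lies in a proper subspace. Concretely I would: (i) cover the short height window by intervals and apply Proposition \ref{pr:3.2} on the part where $Q$ is large enough, contributing a bounded number of subspaces; (ii) on the remaining part where $Q$ is small, bound $H(\mathbf{x})$ by an absolute constant depending only on $n$ and $\delta$, then by $0<\delta\le 1$ replace this by a constant depending only on $n$; (iii) count the resulting lines in $\Pp^{n-1}(K)$ directly, using that a vector of height at most $B$ can, after scaling by a unit/ideal representative, be taken to have all archimedean absolute values at most roughly $B^{d}$, giving at most $(cB)^{nd}$ possibilities for the coordinate vector up to the finitely many choices; the bound $(90n)^{nd}$ (respectively $200^n$ for $K=\Qq$, where $d=1$ and the count is the elementary one of integer points in a box of side $2Q^{1+\delta/2}$ with $Q\le n^n$) then follows by bookkeeping with the explicit constants. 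The hard part will be the numerical bookkeeping: extracting a clean constant of the form $(90n)^{nd}$ from the Schanuel-type count over a general number field, since the naive count introduces field-dependent constants (regulator, discriminant, class number) that must be shown to cancel or be dominated — this is where one uses that we are counting \emph{projective} points and that the window is short, so that ratios of heights, not heights themselves, are what matter, killing the field-dependent factors; for $K=\Qq$ this subtlety disappears and one gets the cleaner $200^n$ by a direct box count combined with the single-subspace statement of Proposition \ref{pr:3.2} applied to the large-$Q$ part of the window.
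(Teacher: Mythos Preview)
Your proposal has a genuine gap: the counting approach cannot yield a bound independent of $\delta$. You propose to split into ``large $Q$'' (where Proposition~\ref{pr:3.2} applies, threshold $Q\ge n^{2n/\delta}$) and ``small $Q$'', and for small $Q$ to bound $H(\mathbf{x})$ by an absolute constant and then count projective points. But in the small-$Q$ range one only has $H(\mathbf{x})<2Q^{1+\delta/2n}$ with $Q$ as large as $n^{2n/\delta}$, so the height bound is of order $n^{2n/\delta}$, \emph{not} a constant depending only on $n$; the claim that ``by $0<\delta\le 1$ replace this by a constant depending only on $n$'' is false since $\delta$ may be arbitrarily small. Any Schanuel-type count over this range would therefore depend on $\delta$, whereas the target bound $(90n)^{nd}$ (or $200^n$) does not. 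The same objection applies to the box count over $\Qq$.

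The paper's argument is of a completely different nature and works uniformly for all $Q\ge 1$. For general $K$, it proves a partitioning lemma: $\Cc^n$ can be divided into at most $(20n)^nM^2$ sectors so that any $n$ vectors from the same sector satisfy $|\det(\mathbf{y}_1,\ldots,\mathbf{y}_n)|\le M^{-1}\|\mathbf{y}_1\|\cdots\|\mathbf{y}_n\|$. One applies this at each archimedean place $v$ to the normalized vectors $\varphi_v(\mathbf{x})=(Q^{-c_{iv}/s(v)}L_i^{(v)}(\mathbf{x}))_i$, choosing $M=(9/2)^{n/2}$; the extra factor $M^{-s(v)}$ absorbs both the $n^{n/2}$ from Hadamard and the $2^{ns(v)}$ coming from the factor $2$ in the height window, so that the product-formula determinant argument of Proposition~\ref{pr:3.2} goes through within each class even without the hypothesis $Q\ge n^{2n/\delta}$. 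With at most $d$ archimedean places this gives $\le(20n)^{nd}M^{2d}\le(90n)^{nd}$ classes, each lying in a single proper subspace. For $K=\Qq$ one instead bounds $|\det(\mathbf{x}_1,\ldots,\mathbf{x}_n)|_v$ at every place and invokes a nontrivial lemma (originating from the author's earlier work on norm form inequalities): a subset of $\Zz^n$ in which every $n$-tuple has determinant $\le D$ in absolute value lies in at most $100^nD^{1/(n-1)}$ proper subspaces. Here $D\le 2n^{n/2}\cdot Q^{-\delta/2}\cdot(\text{factor}\le 1)$, so $D^{1/(n-1)}\le(2n^{n/2})^{1/(n-1)}$ is bounded independently of $Q$ and $\delta$, yielding the $200^n$. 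In neither case is any point-counting involved.
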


In the proof we need a number of lemmas.
For ${\bf y}=(y_1\kdots y_n)\in\Cc^n$, define
$\|{\bf y}\|:=\max (|y_1|\kdots |y_n|)$.

\begin{lemma}\label{le:3.5}
Let $M\geq 1$.
We can partition $\Cc^n$ into at most $(20n)^nM^2$ subsets,
such that
for any ${\bf y}_1\kdots {\bf y}_n\in\Cc^n$ belonging to the same subset,
\begin{equation}\label{3.11}
|\det ({\bf y}_1\kdots {\bf y}_n)|\leq M^{-1}\|{\bf y}_1\|\cdots \|{\bf y}_n\|.
\end{equation}
\end{lemma}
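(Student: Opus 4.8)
The plan is to produce the partition by discretizing the ``directions'' of vectors in $\Cc^n$ together with a coarse logarithmic scale for their norms, so that two vectors in the same cell have nearly equal norm and nearly equal direction; a small determinant then follows because a matrix whose columns are close (after rescaling to unit norm) to a common vector is nearly singular. First I would normalize: for each nonzero ${\bf y}\in\Cc^n$ write ${\bf y}=\|{\bf y}\|\cdot\widehat{\bf y}$ with $\|\widehat{\bf y}\|=1$, and put the origin in its own cell. The quantitative heart is the elementary linear-algebra estimate: if $\widehat{\bf y}_1\kdots\widehat{\bf y}_n$ all lie within distance $\varepsilon$ (in the max-norm on $\Cc^n\cong\Rr^{2n}$) of a fixed unit vector ${\bf u}$, then writing $\widehat{\bf y}_j={\bf u}+{\bf e}_j$ with $\|{\bf e}_j\|\le\varepsilon$ and expanding the determinant multilinearly, every term containing at least two distinct ${\bf e}_j$'s is bounded, while the $n$ terms with a single ${\bf e}_j$ and the one pure-${\bf u}$ term each vanish (they have two equal columns), giving $|\det(\widehat{\bf y}_1\kdots\widehat{\bf y}_n)|\le 2^n n!\,\varepsilon^2$ or so; combined with $|\det({\bf y}_1\kdots{\bf y}_n)|=\big(\prod_j\|{\bf y}_j\|\big)\,|\det(\widehat{\bf y}_1\kdots\widehat{\bf y}_n)|$ this yields \eqref{3.11} once $\varepsilon$ is chosen with $2^n n!\,\varepsilon^2\le M^{-1}$.

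Next I would count the cells. The set $\{\widehat{\bf y}:\|\widehat{\bf y}\|=1\}$ sits inside the max-norm ball of radius $1$ in $\Rr^{2n}$, which is covered by at most $(1/\varepsilon)^{2n}$ (up to a constant factor absorbed into the base) cubes of side $\varepsilon$; each such cube has diameter $\le\varepsilon$ in the sup-norm, so all normalized vectors falling into one cube are within $\varepsilon$ of any chosen center. With $\varepsilon$ of order $M^{-1/2}$ times a constant depending only on $n$, the number of direction-cells is of order $(c_n)^{2n}M^{n}$; but we do not even need a norm-scale refinement for \eqref{3.11}, since that inequality is scale-invariant in each ${\bf y}_j$ separately — so the number of cells is $(20n)^n M^2$ provided the exponent of $M$ works out, i.e. provided we can take $\varepsilon\asymp M^{-1}$ rather than $M^{-1/2}$. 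That forces a sharper version of the determinant bound: the single-${\bf e}_j$ terms vanish, so the estimate is genuinely quadratic in $\varepsilon$, hence $|\det(\widehat{\bf y}_1\kdots\widehat{\bf y}_n)|\le C_n\varepsilon^2$ and one may take $\varepsilon=(C_nM)^{-1/2}$, giving $(1/\varepsilon)^{2n}=(C_nM)^{n}$ — which is $M^n$, not $M^2$. To get the exponent $2$ one must be cleverer: fix the direction of the first vector more coarsely and exploit that only \emph{relative} deviations of ${\bf y}_2\kdots{\bf y}_n$ from ${\bf y}_1$ matter, so that after a cell of ${\bf y}_1$ is chosen one partitions the remaining directions by how much they differ from that of ${\bf y}_1$; the determinant, being alternating, is controlled by the product of the $n-1$ deviations, and a two-scale (coarse/fine) packing argument shows $M$ itself (not $M^{1/(n-1)}$) suffices in each of the $n-1$ fine directions while the coarse count contributes only the $(20n)^n$ factor.

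The main obstacle, then, is exactly this bookkeeping: arranging the covering so that the power of $M$ is $2$ rather than something growing with $n$, by placing \emph{all} of the $M$-dependence into the relationship between $\det$ and the \emph{product} of pairwise direction-differences (using that $\det$ vanishes to order $\ge 2$ along the diagonal where the directions coincide, so only two ``bad'' factors of deviation are unavoidable) and a single fine scale of size $\asymp M^{-1}$ in the appropriate sense, with everything else folded into the $n$-dependent constant $(20n)^n$. I would carry this out by: (i) proving the alternating-multilinearity bound $|\det(\widehat{\bf y}_1\kdots\widehat{\bf y}_n)|\le 2^n n!\max_{i<j}\|\widehat{\bf y}_i-\widehat{\bf y}_j\|^2$ when all $\widehat{\bf y}_k$ are unit and pairwise within $\tfrac12$; (ii) covering the unit-norm sphere of $\Cc^n$ by $\le(20n)^n$ sets of sup-diameter $\le\tfrac12$ (a straightforward volume/packing count in $\Rr^{2n}$), and then subdividing each such set into $\le M^2$ pieces of sup-diameter $\le (2^n n!\,M)^{-1/2}$ — here the count $M^2$ rather than $M^n$ comes from the fact that within a sup-diameter-$\tfrac12$ patch the unit-sphere constraint pins down all but $2$ real coordinates up to the relevant accuracy, so the patch is effectively $2$-dimensional for packing purposes; (iii) combining (i) and (ii) to get \eqref{3.11}, with the exceptional cell for ${\bf 0}$ absorbed by enlarging the constant. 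Step (ii) is the delicate point and the place where the precise geometry of the sup-norm sphere must be used.
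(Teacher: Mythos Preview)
Your proposal contains a genuine gap at exactly the point you flag as ``delicate.'' The claim in step~(ii) that a sup-diameter-$\tfrac12$ patch of the unit sphere is ``effectively $2$-dimensional for packing purposes'' is false: the sup-norm unit sphere in $\Cc^n\cong\Rr^{2n}$ is a union of faces of the cube, each of real dimension $2n-1$, and the single real constraint $\|\widehat{\bf y}\|=1$ cannot pin down more than one real coordinate on any patch. So covering a patch by cells of diameter $\asymp(C_nM)^{-1/2}$ costs $\asymp M^{n-1/2}$, not $M^2$. Relatedly, your determinant bound in~(i) is of the wrong order: in the multilinear expansion $\det({\bf u}+{\bf e}_1\kdots{\bf u}+{\bf e}_n)$ not only the terms with $0$ or $1$ perturbation columns vanish, but \emph{every} term with at least two columns equal to ${\bf u}$ vanishes; the surviving terms have $\ge n-1$ of the ${\bf e}_j$'s, so the bound is $C_n\varepsilon^{\,n-1}$, not $C_n\varepsilon^2$. (For $n=2$ your quadratic bound is simply false: two unit vectors at sup-distance $\varepsilon$ can have determinant of exact order~$\varepsilon$.)

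The mechanism that produces the exponent $2$ in the paper is different from anything in your sketch. One normalises each nonzero ${\bf y}$ not just by its norm but by a full complex scalar: write ${\bf y}=\lambda{\bf z}$ with $z_i=1$ at the first index $i$ where $|y_i|$ attains the maximum. This quotients out the $\Cc^*$-action (two real parameters, not one), so the parameter space for ${\bf z}$ is the cube $[-1,1]^{2n-2}$, of real dimension $2n-2$. Now the column-operation/Hadamard bound
\[
|\det({\bf z}_1\kdots{\bf z}_n)|=|\det({\bf z}_1,{\bf z}_2-{\bf z}_1\kdots{\bf z}_n-{\bf z}_1)|\le n^{n/2}\varepsilon^{\,n-1}
\]
forces $\varepsilon^{-1}=K:=(Mn^{n/2})^{1/(n-1)}$, and the number of subcubes of side $\asymp K^{-1}$ in $[-1,1]^{2n-2}$, times $n$ for the choice of index~$i$, is $\asymp n\,K^{2n-2}=n\,(Mn^{n/2})^2\le(20n)^nM^2$. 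The exponent of $M$ is exactly $\frac{2n-2}{n-1}=2$ because the parameter dimension $2n-2$ and the determinant order $n-1$ match; your normalisation misses the phase and leaves one real dimension too many.
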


\begin{proof}
We can express any non-zero ${\bf y}\in\Cc^n$
uniquely as $\lambda \cdot {\bf z}$, where $\lambda$ is a complex number
with $|\lambda |=\|{\bf y}\|$, and where ${\bf z}=(z_1\kdots z_n)\in\Cc^n$ 
with $\|{\bf z}\|=1$ and
with $|z_j|<1$ for $j<i$ and $z_i=1$ for some $i\in\{ 1\kdots n\}$.
For $j=1\kdots n$, $j\not= i$ we write $z_j=u_j+\sqrt{-1}v_j$ with
$u_j,v_j\in\Rr$.
Further, we express ${\bf 0}$ as $0\cdot {\bf z}$ with
${\bf z}=(1,0\kdots 0)$, and put $u_j=0,v_j=0$ for $j=2\kdots n$.
Thus, with every ${\bf y}\in\Cc^n$ we associate a unique index
$i\in\{ 1\kdots n\}$ and a unique vector
${\bf w}=(u_j,v_j:\, j\not= i)\in [-1,1]^{2n-2}$.

Let $K:= \big( M\cdot n^{n/2}\big)^{1/(n-1)}$.
We divide the $(2n-2)$-dimensional cube
$[-1,1]^{2n-2}$ into at most $([2\sqrt{2}K]+1)^{2n-2}$ subcubes of size
at most $(\sqrt{2}\cdot K)^{-1}$. Then we divide $\Cc^n$ into at most
$n ([2\sqrt{2}\cdot K]+1)^{2n-2}$ classes such that two vectors ${\bf y}$
belong to the same class if the indices $i$ associated with them are equal,
and the vectors ${\bf w}$ associated with them
belong to the same subcube. Notice that the number of classes is bounded
above by
\[
n\Big(2\sqrt{2}\cdot\big( M\cdot n^{n/2}\big)^{1/(n-1)}+1\Big)^{2n-2}
\leq (20n)^n M^2.
\]

Now let ${\bf y}_1\kdots {\bf y}_n$ belong to the same class.
For $k=1\kdots n$, write ${\bf y}_k=\lambda_k {\bf z}_k$ as above
and let ${\bf w}_k$ be the corresponding vector from $[-1,1]^{2n-2}$.
Since ${\bf w}_1\kdots {\bf w}_n$ belong to the same subcube we have
\[
\|{\bf z}_k-{\bf z}_1\|\leq \sqrt{2}\cdot \|{\bf w}_k-{\bf w}_1\|\leq K^{-1}
\]
for $k=2\kdots n$. Hence, using Hadamard's inequality,
\begin{eqnarray*}
|\det ({\bf z}_1\kdots {\bf z}_n)|
&=&
|\det ({\bf z}_1, {\bf z}_2-{\bf z}_1\kdots {\bf z}_n-{\bf z}_1)|
\\
&\leq& n^{n/2}(K^{-1})^{n-1}=M^{-1}
\end{eqnarray*}
which implies
\begin{eqnarray*}
|\det ({\bf y}_1\kdots {\bf y}_n)|&=&|\lambda_1\cdots \lambda_n|\cdot
|\det ({\bf z}_1\kdots {\bf z}_n)|
\\
&\leq& M^{-1}\|{\bf y}_1\|\cdots \|{\bf y}_n\|.
\end{eqnarray*}
This completes our proof.
\end{proof}

\begin{lemma}\label{le:3.3}
Let $D$ be a positive real, and let $\mathcal{S}$ be a subset of $\Zz^n$
such that
\[
|\det ({\bf x}_1\kdots {\bf x}_n)|\leq D\ \
\mbox{for } {\bf x}_1\kdots {\bf x}_n\in\mathcal{S}.
\]
Then $\mathcal{S}$ is contained in a union of at most
\[
100^nD^{1/(n-1)}
\]
proper linear subspaces of $\Qq^n$.
\end{lemma}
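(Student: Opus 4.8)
The plan is to argue by induction on $n$, with $n=1$ vacuous and $n=2$ as the genuine base case.

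\textbf{Reductions.} First I would note that a finite subset of $\SSS$ not contained in $N$ proper subspaces already witnesses the same for $\SSS$, so $\SSS$ may be taken finite; that replacing each $\x\in\SSS$ by the primitive vector on the ray $\Qq_{>0}\x$ only shrinks determinants and does not change which subspaces contain $\SSS$, so every $\x\in\SSS$ may be assumed primitive; and that if $\SSS$ does not span $\Qq^n$ it lies in one proper subspace, so we may assume $\SSS$ spans $\Qq^n$, whence $D\geq 1$.

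\textbf{Base case $n=2$.} Fix $\mathbf{a}\in\SSS$; by primitivity $\x\mapsto\det(\mathbf{a},\x)$ is a surjection $\Zz^2\to\Zz$ with kernel $\Zz\mathbf{a}$, so the fibres $\ell_j:=\{\x\in\Zz^2:\det(\mathbf{a},\x)=j\}$ are cosets of $\Zz\mathbf{a}$, $\ell_j$ avoids $\mathbf{0}$ for $j\neq 0$, and every $\x\in\SSS$ lies on some $\ell_j$ with $|j|\leq D$. Parametrising $\ell_j$ as $\mathbf{p}_j+s\mathbf{a}$ one computes $\det(\x,\y)=\det(\mathbf{p}_j,\mathbf{p}_{j'})-jt+j's$ for $\x\in\ell_j$, $\y\in\ell_{j'}$, an affine function of the parameters. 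Hence if $\SSS$ meets $\ell_j$ in $m_j$ points, these parameters lie in an interval of length $\geq m_j-1$, giving $m_j\leq 1+D/|j|$ and showing distinct points of $\ell_j$ span distinct lines; applying the display instead to a fibre $\ell_{j_*}$ of maximal $m_{j_*}$ against any other $\ell_j$ ($j_*,j\neq 0$) forces the coupled inequality $(m_j-1)|j_*|+(m_{j_*}-1)|j|\leq 2D$. Summing: there are at most $2D+1$ singleton fibres, and the coupled inequality run against $m_j\leq 1+D/|j|$ keeps the remaining contribution — hence the number of lines — below $100^2D$. This second summation is the only delicate point of the base case; the coupling is precisely what removes the otherwise spurious $\log D$ factor.

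\textbf{Inductive step ($n\geq 3$).} I would pick $\mathbf{a}_1\kdots\mathbf{a}_{n-1}\in\SSS$ spanning a hyperplane $H$ of $\Qq^n$ with $V:=|\det_{H\cap\Zz^n}(\mathbf{a}_1\kdots\mathbf{a}_{n-1})|$ minimal, let $c$ be a primitive normal to $H$, and set $L_j:=\{\x\in\Zz^n:\langle c,\x\rangle=j\}$, so $\Zz^n=\bigsqcup_jL_j$ and $L_0=H\cap\Zz^n$. Writing $\x\in\SSS$ as ($H$-part) $+\langle c,\x\rangle\mathbf{b}_n$ for a fixed $\mathbf{b}_n$ completing $L_0$ to a $\Zz$-basis of $\Zz^n$, a short multilinearity computation gives $|\det(\mathbf{a}_1\kdots\mathbf{a}_{n-1},\x)|=|\langle c,\x\rangle|\cdot V$, so $\SSS$ meets only layers with $|j|\leq D$; the same device yields, for $\x_1\kdots\x_{n-1},\x^{*}$ all in one layer $L_j$ with $j\neq 0$,
\[
\det{}_{L_0}\!\big(\x_1-\x^{*}\kdots\x_{n-1}-\x^{*}\big)=\pm\,j^{-1}\det{}_{\Zz^n}\!\big(\x_1\kdots\x_{n-1},\x^{*}\big).
\]
Fixing a base point $\x^{*}\in L_j\cap\SSS$, the set $\{\x-\x^{*}:\x\in L_j\cap\SSS\}\subseteq L_0\cong\Zz^{n-1}$ therefore satisfies the hypothesis in dimension $n-1$ with $D$ replaced by $D/|j|$; the induction hypothesis puts it in $\leq 100^{n-1}(D/|j|)^{1/(n-2)}$ proper subspaces $W$ of $H$, and passing to the linear span of each translate $\x^{*}+W$ (proper in $\Qq^n$ since $\x^{*}\notin H$) covers $L_j\cap\SSS$ with that many proper subspaces. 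A layer whose trace spans an affine subspace of dimension $\leq n-2$ is covered by its linear span alone, and $L_0\cap\SSS\subseteq H$ needs only $H$.

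\textbf{The main obstacle.} The step I expect to be the real work is the aggregation: a naive sum over all $|j|\leq D$ of $100^{n-1}(D/|j|)^{1/(n-2)}$ is of order $100^{n-1}D$, which for large $D$ overshoots $100^nD^{1/(n-1)}$. The heart of the matter is to show that, after the minimal choice of $H$ and a judicious choice of base point in each layer, only a controlled number of layers are ``heavy'' (genuinely need the full inductive bound), all other non-empty layers having affine rank $\leq n-2$ and thus costing one subspace each, \emph{and} that these unit contributions are forced to lie in few common subspaces so that their total is also controlled. Both facts are the $n$-dimensional analogue of the coupled inequality $(m_j-1)|j_*|+(m_{j_*}-1)|j|\leq 2D$ of the base case; proving them in general is where I would invoke \lemref{le:3.5}, to bound how many distinct subspaces the thin layers can be spread across in terms of the determinant data, after which the constants should collapse to $100^nD^{1/(n-1)}$.
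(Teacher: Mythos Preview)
The paper does not prove this lemma at all; it simply cites Lemma~5 of \cite{Ev00}. So there is no ``paper's own proof'' to compare against, and the relevant question is whether your sketch is a viable route to the stated bound.

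It is not, as it stands. You correctly identify the obstacle yourself: summing the inductive bound $100^{n-1}(D/|j|)^{1/(n-2)}$ over $|j|\leq D$ gives something of order $D$ (or $D\log D$ when $n=3$), whereas the target is $D^{1/(n-1)}$. This is not a constant-factor shortfall that tighter bookkeeping will absorb; the exponent of $D$ is wrong by a factor of $n-1$. The ``coupled inequality'' you exploit for $n=2$ is a genuinely two-dimensional phenomenon: it relates the \emph{number of points} on two fibres, which in dimension $2$ coincides with the number of lines they generate. In higher dimensions the analogous coupling would have to control the number of \emph{codimension-one} subspaces needed for each layer, and your determinant identity only controls an $(n-1)$-dimensional determinant inside each layer, not the interaction between the subspace-counts of different layers. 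So the mechanism that kills the $\log D$ in the base case does not obviously survive to kill a full power of $D$ in the step.

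Your proposed rescue via \lemref{le:3.5} is also unconvincing. That lemma partitions $\Cc^n$ into $(20n)^nM^2$ direction-classes in which determinants are small \emph{relative to the product of norms}; it gives nothing unless you already control $\|\x\|$, which you do not. In the paper it is used in the opposite direction (to feed into the present lemma, via \lemref{le:3.4}), not to prove it. If you want a self-contained argument, the layer-by-layer induction is probably the wrong skeleton; the exponent $1/(n-1)$ suggests instead choosing a full basis $\mathbf{a}_1,\ldots,\mathbf{a}_n\in\SSS$ of minimal $|\det|$, using Cramer's rule to confine every $\x\in\SSS$ to a box in those coordinates, and then covering that box by hyperplanes---but carrying this out with the stated constant is exactly the content of the cited lemma in \cite{Ev00}.
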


\begin{proof} This is Lemma 5 of \cite{Ev00}.
\end{proof}

We deduce the following consequence.

\begin{lemma}\label{le:3.4}
Let $D_v$ ($v\in M_{\Qq}$) be positive reals such that $D_v=1$ for all but
finitely many $v$ and put $D:=\prod_{v\in M_{\Qq}} D_v$.
Let $\mathcal{T}$ be a subset of $\Qq^n$ such that
\begin{equation}\label{3.4}
|\det ({\bf x}_1\kdots {\bf x}_n)|_v\leq D_v\ \mbox{for }
v\in M_{\Qq},\,\, {\bf x}_1\kdots {\bf x}_n\in\mathcal{T}.
\end{equation}
Then $\mathcal{T}$ is contained in a union of at most
\begin{equation}\label{3.x}
100^nD^{1/(n-1)}
\end{equation}
proper linear subspaces of $\Qq^n$.
\end{lemma}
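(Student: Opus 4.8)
\emph{Plan.} The idea is to transport the whole problem into $\Zz^n$, where \lemref{le:3.3} applies, by replacing $\Qq^n$ by a lattice adapted to $\mathcal{T}$. One must \emph{not} simply clear denominators by a fixed integer: that would discard all information contained in \formref{3.4} at the finite places, and the bound would blow up. The correct lattice to use is $\Lambda$, the $\Zz$-module generated by $\mathcal{T}$. First dispose of the trivial case: if $\mathcal{T}$ does not span $\Qq^n$ over $\Qq$, then $\mathcal{T}$ lies in a single proper linear subspace of $\Qq^n$ and there is nothing to prove. So assume henceforth that $\mathcal{T}$ spans $\Qq^n$.

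I would first verify that $\Lambda$ is a lattice of full rank. Choose $\Qq$-linearly independent ${\bf b}_1\kdots {\bf b}_n\in\mathcal{T}$ and put $\delta_0:=\det({\bf b}_1\kdots {\bf b}_n)\in\Qq^*$. For ${\bf x}\in\mathcal{T}$, Cramer's rule expresses the $j$-th coordinate of ${\bf x}$ on the basis ${\bf b}_1\kdots {\bf b}_n$ as $\det({\bf b}_1\kdots {\bf b}_{j-1},{\bf x},{\bf b}_{j+1}\kdots {\bf b}_n)/\delta_0$. By \formref{3.4} the numerator has $v$-adic absolute value at most $D_v$ for every $v$, and since $D_v=1$ for all but finitely many $v$ and $|\delta_0|_v=1$ for all but finitely many $v$, all these coordinates lie in $\tfrac1{M_0}\Zz$ for one fixed positive integer $M_0$. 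Hence $\mathcal{T}\subseteq \tfrac1{M_0}(\Zz{\bf b}_1+\cdots+\Zz{\bf b}_n)$, so $\Lambda$ is a subgroup of a finitely generated abelian group, hence finitely generated and free, and it has rank $n$ since it contains ${\bf b}_1\kdots {\bf b}_n$. Fix a $\Zz$-basis ${\bf e}_1\kdots {\bf e}_n$ of $\Lambda$ and let $\varphi$ be the $\Qq$-linear automorphism of $\Qq^n$ with $\varphi({\bf e}_i)$ equal to the $i$-th standard basis vector. Then $\varphi(\mathcal{T})\subseteq\varphi(\Lambda)=\Zz^n$, and $\det(\varphi({\bf x}_1)\kdots\varphi({\bf x}_n))=\det({\bf x}_1\kdots {\bf x}_n)/\det({\bf e}_1\kdots {\bf e}_n)$ for ${\bf x}_1\kdots {\bf x}_n\in\mathcal{T}$.

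The key estimate is a lower bound for the covolume $|\det({\bf e}_1\kdots {\bf e}_n)|_\infty$. Each ${\bf e}_i$ is a $\Zz$-linear combination of vectors of $\mathcal{T}$, so by multilinearity $\det({\bf e}_1\kdots {\bf e}_n)$ is a $\Zz$-linear combination of determinants $\det({\bf x}_1\kdots {\bf x}_n)$ with ${\bf x}_i\in\mathcal{T}$; since an integer combination cannot lower the valuation at a finite place, \formref{3.4} gives $|\det({\bf e}_1\kdots {\bf e}_n)|_v\leq D_v$ for all $v\nmid\infty$, whence by the product formula $|\det({\bf e}_1\kdots {\bf e}_n)|_\infty=\prod_{v\nmid\infty}|\det({\bf e}_1\kdots {\bf e}_n)|_v^{-1}\geq\prod_{v\nmid\infty}D_v^{-1}$. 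Combined with $|\det({\bf x}_1\kdots {\bf x}_n)|_\infty\leq D_\infty$ from \formref{3.4}, this yields
\[
|\det(\varphi({\bf x}_1)\kdots\varphi({\bf x}_n))|_\infty\leq D_\infty\prod_{v\nmid\infty}D_v=D\qquad({\bf x}_1\kdots {\bf x}_n\in\mathcal{T}).
\]
Now \lemref{le:3.3}, applied to $\varphi(\mathcal{T})\subseteq\Zz^n$, puts $\varphi(\mathcal{T})$ into a union of at most $100^nD^{1/(n-1)}$ proper linear subspaces of $\Qq^n$; applying the automorphism $\varphi^{-1}$, which preserves properness of subspaces, gives the same bound for $\mathcal{T}$, as required.

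\emph{Main obstacle.} The step that needs genuine thought is the covolume estimate: one has to see that the non-archimedean conditions in \formref{3.4} force $\mathcal{T}$ into a lattice whose covolume is at least $\prod_{v\nmid\infty}D_v^{-1}$, so that passing to a $\Zz$-basis converts the \emph{full} product $D=\prod_{v\in M_{\Qq}}D_v$ into the bound on the ordinary determinant demanded by \lemref{le:3.3}. The rest — Cramer's rule, the finiteness of the set of $v$ with $D_v\neq1$, and the product formula — is routine bookkeeping.
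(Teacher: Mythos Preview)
Your proof is correct and follows the same overall strategy as the paper: reduce to \lemref{le:3.3} by embedding $\mathcal{T}$ in a full-rank lattice whose covolume is bounded below by $\prod_{v\nmid\infty}D_v^{-1}$, then change basis. The execution differs in one interesting point. You take $\Lambda$ to be the $\Zz$-span of $\mathcal{T}$ and bound $|\det({\bf e}_1\kdots{\bf e}_n)|_v$ at finite $v$ by expanding the determinant multilinearly as a $\Zz$-combination of determinants of $n$-tuples from $\mathcal{T}$ and invoking the ultrametric inequality. The paper instead first replaces each $D_v$ (finite $v$) by the exact maximum of $|\det(\cdot)|_v$ over $\mathcal{T}^n$, then for each finite $v$ picks a tuple realizing that maximum, lets $\mathcal{M}_v$ be its $\Zz_v$-span, shows $\mathcal{T}\subseteq\mathcal{M}_v$ via Cramer, and sets $\mathcal{M}=\bigcap_{v\nmid\infty}\mathcal{M}_v$; this gives the covolume \emph{exactly} equal to $\prod_{v\nmid\infty}D_v^{-1}$ (after the reduction). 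Your lattice $\Lambda$ sits inside the paper's $\mathcal{M}$, so has covolume at least as large, and your multilinearity argument recovers the needed inequality without the preliminary reduction step---a small but pleasant simplification.
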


\begin{proof}
Without loss of generality we assume that $\mathcal{T}$ is not contained
in a proper linear subspace of $\Qq^n$.
Further, without loss of generality we assume that for every finite
place $v$ of $\Qq$,
\[
D_v=\max\{ |\det ({\bf x}_1\kdots {\bf x}_n)|_v:\,
{\bf x}_1\kdots {\bf x}_n\in\mathcal{T}\}.
\]
Indeed, if the maximum were $D_v'<D_v$, we could replace $D_v$ by $D_v'$
without strengthening \eqref{3.4}, and replace \eqref{3.x} by a smaller
upper bound.

Fix a finite place $v$ and let $\Zz_v:=\{ x\in\Qq :\, |x|_v\leq 1\}$,
i.e., $\Zz_v$ is the localization of $\Zz$ at $v$.
Choose
${\bf y}_1\kdots {\bf y}_n\in\mathcal{T}$ such that
$|\det({\bf y}_1\kdots {\bf y}_n)|_v=D_v$, and let
$\mathcal{M}_v$ denote the $\Zz_v$-module generated by
${\bf y}_1\kdots {\bf y}_n$.
Now if ${\bf x}\in\mathcal{T}$, then ${\bf x}=\sum_{i=1}^n u_i{\bf y}_i$
with $u_1\kdots u_n\in\Qq$. We can express $u_i$ as a quotient of two
determinants, where in the denominator we have
$\det({\bf y}_1\kdots {\bf y}_n)$, and in the numerator the determinant
obtained by replacing ${\bf y}_i$ by ${\bf x}$. Using \eqref{3.4},
this implies that $|u_i|_v\leq 1$ for $i=1\kdots n$.
Hence $\mathcal{T}$ is contained in $\mathcal{M}_v$.

Applying this for every finite place $v$,
we infer that $\mathcal{T}$ is contained
in $\mathcal{M}:=\bigcap_{v\not=\infty} \mathcal{M}_v$,
where the intersection is
over all finite places. The set $\mathcal{M}$
is a lattice of rank $n$ in $\Qq^n$
of determinant $\Delta :=\big(\prod_{v\not=\infty} D_v\big)^{-1}$.
Choose a basis ${\bf z}_1\kdots {\bf z}_n$ of $\mathcal{M}$.
Then $|\det ({\bf z}_1\kdots {\bf z}_n)|=\Delta$.
Define the linear map
$\varphi:\,{\bf u}=(u_1\kdots u_n)\mapsto\sum_{i=1}^n u_i{\bf z}_i$
and let $\mathcal{S}:=\varphi^{-1}(\mathcal{T})$.
Then $\mathcal{S}\subseteq\Zz^n$ and for any
${\bf u}_1\kdots {\bf u}_n\in\mathcal{S}$ we have
\begin{eqnarray*}
|\det ({\bf u}_1\kdots {\bf u}_n)|&=&\Delta^{-1}\cdot
|\det (\varphi({\bf u}_1)\kdots \varphi ({\bf u}_n))|
\\
&\leq&\Delta^{-1}D_{\infty} =\prod_{v\in M_{\Qq}} D_v=D.
\end{eqnarray*}
Now by Lemma \ref{le:3.3}, the set $\mathcal{S}$, and hence also
$\mathcal{T}$, is contained in a union of not more than $100^nD^{1/(n-1)}$
proper linear subspaces of $\Qq^n$.
\end{proof}

We leave as an open problem to generalize the above Lemma
to arbitrary algebraic number fields.

\begin{proof}[Proof of Proposition \ref{pr:3.3}]
We start with the case that $K$ is an arbitrary number field.
Let $\mathcal{T}'$ be the set of solutions ${\bf x}\in O_S^n$ of \eqref{2.3}
with $Q\leq H({\bf x})<2Q^{1+\delta /2n}$. Completely analogously to
\eqref{3.12} we have for ${\bf x}\in\mathcal{T}'$, $v\in S$, $i=1\kdots n$,
\begin{equation}\label{3.14}
|L_i^{(v)}({\bf x})|_v\leq 2^{s(v)}C_vQ^{c_{iv}+s(v)\delta /2n}.
\end{equation}

For ${\bf x}\in\mathcal{T}'$ and any infinite place $v$ of $K$,
define the vector
\[
\varphi_v({\bf x}):=\big( Q^{-c_{1v}/s(v)}L_1^{(v)}({\bf x})\kdots
Q^{-c_{nv}/s(v)}L_n^{(v)}({\bf x})\big).
\]
Notice that for each infinite place $v$ of $K$ we have
$\varphi_v({\bf x})\in\Cc^n$.
Put $M:= (9/2)^{n/2}$. By Lemma \ref{le:3.5},
and since $K$ has at most $d$ infinite places, we can partition
$\mathcal{T}'$ into at most
\[
(20n)^{nd}M^{2d}\leq (90n)^{nd}
\]
classes, such that if ${\bf x}_1\kdots {\bf x}_n$ belong to the same class,
then for each infinite place $v$,
\[
|\det \big(\varphi_v({\bf x}_1)\kdots\varphi_v({\bf x}_n)\big)|\leq
M^{-1}\prod_{i=1}^n\|\varphi_v({\bf x})\| .
\]

We show that the set of elements of $\mathcal{T}'$ from a given class
is contained in a proper linear subspace of $K^n$, that is,
that any $n$ elements of $\mathcal{T}'$
from the same class have determinant $0$. So let ${\bf x}_1\kdots {\bf x}_n$
be elements of $\mathcal{T}'$ from the same class.
Then by \eqref{3.14} and what we just proved, we have for every infinite place
$v$ of $K$, using $|\cdot |_v=|\cdot |_v^{s(v)}$ on $\overline{K_v}=\Cc$,
\begin{eqnarray*}
|\det ({\bf x}_1\kdots {\bf x}_n)|_v&=&\Delta_v^{-1}Q^{\sum_{i=1}^n c_{iv}}
\cdot
|\det \big(\varphi_v({\bf x}_1)\kdots\varphi_v({\bf x}_n)\big)|^{s(v)}
\\
&\leq& \Delta_v^{-1}Q^{\sum_{i=1}^n c_{iv}}M^{-s(v)} \prod_{i=1}^n\|\varphi_v({\bf x})\|^{s(v)}
\\
&\leq&
\Delta_v^{-1}Q^{\sum_{i=1}^n c_{iv}}M^{-s(v)} C_v^n2^{ns(v)}Q^{s(v)\delta /2},
\end{eqnarray*}
which, thanks to our choice of $M$, yields
\[
|\det ({\bf x}_1\kdots {\bf x}_n)|_v<
\Delta_v^{-1}C_vQ^{(\sum_{i=1}^n c_{iv})+s(v)\delta /2}.
\]
For the finite places $v\in S$ we have \eqref{3.20} and for the places
$v$ outside $S$, \eqref{3.21}. By taking the product over $v\in M_K$,
using \eqref{2.4}, we obtain
\[
\prod_{v\in M_K} |\det ({\bf x}_1\kdots {\bf x}_n)|_v<
\prod_{v\in S} (\Delta_v^{-1}C_v^n)
Q^{\sum_{v\in S}\sum_{i=1}^n c_{iv}+(\delta /2)}\leq Q^{-\delta /2}\leq 1.
\]
Now the product formula implies indeed that for any
${\bf x}_1\kdots {\bf x}_n$ in the same class we have
$\det ({\bf x}_1\kdots {\bf x}_n)=0$. This proves Proposition \ref{pr:3.3}
in the case that $K$ is an arbitrary algebraic number field.

Now let $K=\Qq$. Let ${\bf x}_1\kdots {\bf x}_n\in\mathcal{T}'$.
First let $v=\infty$ be the infinite place of $\Qq$.
Notice that $s(\infty )=1$.
Then using \eqref{3.14} we obtain in a similar manner as \eqref{3.13},
\[
|\det ({\bf x}_1\kdots {\bf x}_n)|_{\infty}\leq
n^{n/2}\Delta_{\infty}^{-1} C_{\infty}^n\cdot
2Q^{(\sum_{i=1}^n c_{i\infty})+\delta /2}.
\]
For the finite places in $S$ and for the places outside $S$ we have
\eqref{3.20}, \eqref{3.21}. Now using Lemma \ref{le:3.4},
\eqref{2.4},
we infer that $\mathcal{T}'$ is contained in a union of at most
\begin{eqnarray*}
&&100^n\Big( 2n^{n/2}
\prod_{v\in S} (\Delta_v^{-1}C_v^n)\,\cdot\,
Q^{\sum_{v\in S}\sum_{i=1}^n c_{iv}+(\delta /2)}\Big)^{1/(n-1)}
\\
&&\qquad\qquad\qquad\leq
100^n\big( 2n^{n/2}\big)^{1/(n-1)} < 200^n
\end{eqnarray*}
proper linear subspaces of $\Qq^n$. This completes our proof.
\end{proof}

\begin{proof}[Proof of Theorem \ref{2.2}]
Let $K$ be an arbitrary algebraic number field of degree $d$.
We divide the solutions into consideration into those with 
$H({\bf x})\in I_1$ and those with $H({\bf x})\in I_2$, where
\[
I_1=\big[n^{2n/\delta},\max (2H,n^{2n/\delta})\big),\ \ \ 
I_2=\big[ 1, n^{2n/\delta}\big).
\]
We have $I_1\subseteq \bigcup_{h=0}^{A-1} \big[ Q_h,Q_h^{1+\delta/2n}\big)$,
where
\begin{eqnarray*}
&&Q_h=(n^{2n/\delta})^{(1+\delta/2n)^h}\ \ (h=0,1,2,\ldots),
\\
&&A=1+\left[\frac{\log\Big(\log\max (2H,n^{2n/\delta})/\log n^{2n/\delta}\Big)}
{\log (1+\delta/2n)}\right]\leq 4n\delta^{-1}\log\log 4H.
\end{eqnarray*}
So by Proposition \ref{pr:3.2}, the solutions ${\bf x}\in O_S^n$ of \eqref{2.3}
with $H({\bf x})\in I_1$
lie in a union of at most $A$ proper linear subspaces of $K^n$.

Next, we have $I_2\subseteq \bigcup_{h=0}^{B-1} 
\big[ Q_h, 2Q_h^{1+\delta /2n}\big)$, where
\begin{eqnarray*}
&&Q_h =2^{\gamma_h}\ \mbox{with } 
\gamma_h=\frac{2n}{\delta}\Big( (1+(\delta /2n))^h\,-1\Big)\ \ 
(h=0,1,2,\ldots ),
\\
&&
B= 1+\left[\frac{\log(1+\log n/\log 2)}{\log(1+\delta /2n)}\right]
\leq 4n\delta^{-1}\log (3\log n).
\end{eqnarray*}
So by Proposition \ref{pr:3.3}, the solutions ${\bf x}\in O_S^n$
of \eqref{2.3} with $H({\bf x})\in I_2$ lie in a union of at most
$(90n)^{nd}B$ proper linear subspaces of $K^n$. 

We conclude that the number of subspaces containing the solutions 
${\bf x}\in O_S^n$ of \eqref{2.3}
with $H({\bf x})\leq\max (2H,n^{2n/\delta})$ is bounded above by
\[
A+(90n)^{nd}B\leq \delta^{-1}\big( (10^3n)^{nd}+4n\log\log 4H\big).
\]
In the case $K=\Qq$ we have a similar
computation, replacing $(90n)^{nd}$ by $200^n$.
\end{proof}

\section{On the number of solutions outside the exceptional subspace $U_0$}
\label{Section4}

It seems to be a very difficult open problem to give an upper bound
for the number of solutions of \eqref{2.3} lying outside the exceptional
subspace $U_0$ from Theorem C. To obtain such a bound we would have
to combine the interval result Theorem \ref{th:3.1} with some
strengthening
of the gap principle Proposition \ref{pr:3.2} giving an upper bound for
the number of solutions ${\bf x}$ with $Q\leq H({\bf x})< Q^{1+\delta /2n}$
instead of the number of subspaces containing these solutions.
But this seems to be totally out of reach.
However, such a strong gap principle may exist in certain applications
where one considers solutions ${\bf x}$ with additional constraints,
and then it may be possible to estimate
from above the number of such restricted solutions.

In 1990, Schmidt \cite{Schm89b} gave an example of a system 
of inequalities \eqref{2.3} which is known to have finitely many solutions,
but which is such that 
from any explicit upper bound for the number of solutions
of this system one can derive a very strong
\emph{effective} finiteness result for some related system of Diophantine inequalities.

We give another such example, which is a modification of a result from
Hirata-Kohno and the author \cite{EvHK02}.
We consider the inequality
\begin{eqnarray}\label{4.1}
&&|x_1+x_2\xi +x_3\xi^2|\leq H({\bf x})^{-2-\delta}
\\
\nonumber
&&\qquad\qquad\mbox{in }
{\bf x}=(x_1,x_2,x_3)\in\Zz^3\ \mbox{with gcd}(x_1,x_2,x_3)=1,
\end{eqnarray}
where $\xi$ is a real algebraic number of degree $\geq 3$
and where $\delta >0$. By augmenting this
single inequality with the two trivial inequalities
\[
|x_2|\leq H({\bf x}),\ \ \ |x_3|\leq H({\bf x})
\]
we obtain a system of type \eqref{2.3}.
Since $\xi$ has degree at least $3$, the linear form
$X_1+X_2\xi +X_3\xi^2$ does not vanish identically on any non-zero non-linear
subspace $U$ of $\Qq^3$. Consequently, if $U$ is a
linear subspace of $\Qq^3$ of dimension $k>0$
we have $\nu (U)=-2-\delta +k-1$.
Hence
\[
\mu (U)=\frac{\nu (\Qq^3 )-\nu (U)}{3-\dim U}=1
\]
if $U\not= (0)$ and $\mu ((0))=-\delta /3<1$.
So according to the description of $U_0$ in Section \ref{Section3},
we have $U_0=({\bf 0})$ and by Theorem C, \eqref{4.1} has only finitely
many solutions.
(This can also be deduced directly from Theorem A).

We prove the following Proposition.

\begin{prop}\label{pr:4.1}
Let $N$ be an upper bound for the number of solutions of \eqref{4.1}.
Then for every $\alpha\in\Qq$ we have
\begin{equation}\label{4.2}
|\xi -\alpha |\geq 2^{-2-\delta}(1+|\xi |)^{-1}N^{-3-\delta}\cdot
H(\alpha )^{-3-\delta}.
\end{equation}
\end{prop}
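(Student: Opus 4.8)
The plan is to argue by contraposition: suppose that for some $\alpha = p/q \in \Qq$ (with $p,q$ coprime integers, so $H(\alpha) = \max(|p|,|q|)$) inequality \eqref{4.2} fails, i.e.
\[
|\xi - \alpha| < 2^{-2-\delta}(1+|\xi|)^{-1} N^{-3-\delta} H(\alpha)^{-3-\delta},
\]
and then manufacture from $\alpha$ more than $N$ distinct solutions of \eqref{4.1}, contradicting the definition of $N$. The natural source of many solutions is to take integer multiples: for a positive integer $k$ consider the vector $\x_k = (x_1,x_2,x_3)$ obtained from $(k p^2, k pq, k q^2)$ after dividing out the gcd, or more simply the primitive vector underlying $(p^2, pq, q^2)$ rescaled. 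The key identity is that
\[
x_1 + x_2\xi + x_3\xi^2 \quad\text{with}\quad (x_1,x_2,x_3)\ \text{proportional to}\ (p^2, pq, q^2)
\]
equals (a constant times) $q^2(\xi - \alpha)(\xi - \alpha')$ type expressions — more precisely $p^2 + pq\,\xi + q^2\xi^2$ is not quite what we want; instead one should use $q^2\xi^2 - $ something. Let me reconsider: the relevant factorization is $q^2\xi^2 + pq\xi + p^2$? No — the clean one is $(q\xi - p)(q\xi + p) \cdot(\dots)$. The right choice is $\x$ proportional to $(p^2, -pq, q^2)$ is still not a perfect square of a linear form in $\xi$; rather we want the linear form $X_1 + X_2\xi + X_3\xi^2$ evaluated at a vector so that it becomes $(q\xi - p)^2/q^{\,0}$ up to scaling, i.e. take $(x_1,x_2,x_3) = (p^2, -2pq, \dots)$ — this does not have integer-friendly shape.

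So the cleaner route, and the one I would actually carry out, is: take $\x = c\cdot(p^2, pq, q^2)$ where $c$ is chosen to make the entries coprime (note $\gcd(p^2,pq,q^2)=1$ since $\gcd(p,q)=1$, so in fact $c=1$ and $\x=(p^2,pq,q^2)$ is already primitive). Then $x_1 + x_2\xi + x_3\xi^2 = p^2 + pq\,\xi + q^2\xi^2$; this is not small. Instead evaluate at $\x=(p^2, -pq\cdot?, \dots)$. The correct primitive vector is the one representing $\xi^2$ accurately, namely choose $\x$ so that $x_1 + x_2\xi + x_3\xi^2$ is, up to a bounded factor, $q^2(\xi-\alpha)\cdot(\text{bounded})$ — and the way to get a genuinely small value with a vector of height $\approx H(\alpha)^2$ is to take the vector representing the rational approximation of $\xi^2$ induced by $\alpha$: since $\alpha \approx \xi$, we have $\alpha^2 \approx \xi^2$, so $(x_1,x_2,x_3)$ proportional to $(-p^2, 0, q^2)$ gives $-p^2 + q^2\xi^2 = q^2(\xi^2 - \alpha^2) = q^2(\xi-\alpha)(\xi+\alpha)$, which has absolute value $\leq q^2|\xi-\alpha|\cdot(|\xi|+|\alpha|+1)$. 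With $H(\x) = \max(p^2,q^2) = H(\alpha)^2$ and $H(\alpha) \geq 1$, and using $|\xi + \alpha| \leq (1+|\xi|)(1+|\alpha|) \leq (1+|\xi|)\cdot 2H(\alpha)$ (crudely), the failure of \eqref{4.2} gives $|x_1+x_2\xi+x_3\xi^2| \leq H(\x)^{-1-\delta}\cdot(\text{small})\cdot N^{-3-\delta}$, hence certainly $\leq H(\x)^{-2-\delta} N^{-?}$ once we account for $H(\x) = H(\alpha)^2$. Then the integer multiples $k\cdot(-p^2,0,q^2)/\gcd$ for $k = 1, 2, \dots$ — after making each primitive — would all be solutions provided $k$ is not too large; but multiplying by $k$ raises the height by $k$ and the linear form value by $k$, so the inequality $k\cdot(\text{value}) \leq (k H(\x))^{-2-\delta}$ only survives for bounded $k$. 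This is exactly where the factor $N^{-3-\delta}$ comes in: the slack provided by assuming \eqref{4.2} fails is large enough (a factor $N^{3+\delta}$) to allow $k$ to range over $1, \dots, N$ (roughly), producing $N$ distinct primitive solutions after dividing out gcd's — and the bookkeeping of gcd's is arranged so that distinct $k$ give distinct primitive vectors, or at worst one loses a bounded factor absorbed into the constants $2^{-2-\delta}(1+|\xi|)^{-1}$.

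The main obstacle I anticipate is the bookkeeping that converts $N$ integer multiples $k\x$ (for $k=1,\dots,N$) into $N$ genuinely distinct primitive solutions of \eqref{4.1}: one must check that reducing $k\x$ to its primitive representative does not collapse too many values of $k$ together, and that the height estimate $H(\text{primitive part of } k\x) \leq k\,H(\alpha)^2$ together with the failed inequality \eqref{4.2} indeed yields $|x_1+x_2\xi+x_3\xi^2| \leq H(\x)^{-2-\delta}$ for the primitive vector. Tracking the exponents: the primitive vector has height $\leq kH(\alpha)^2$, so we need the linear-form value $\leq (kH(\alpha)^2)^{-2-\delta} = k^{-2-\delta}H(\alpha)^{-4-2\delta}$; we have it $\leq k\cdot q^2|\xi-\alpha||\xi+\alpha| \leq k H(\alpha)^2 \cdot 2^{-2-\delta}(1+|\xi|)^{-1}N^{-3-\delta}H(\alpha)^{-3-\delta}\cdot 2(1+|\xi|)H(\alpha) = k\cdot 2^{-1-\delta}N^{-3-\delta}H(\alpha)^{0}$ — so we need $k\cdot 2^{-1-\delta}N^{-3-\delta} \leq k^{-2-\delta}H(\alpha)^{-4-2\delta}$, i.e. $k^{3+\delta} \leq 2^{1+\delta}N^{3+\delta}H(\alpha)^{-4-2\delta}$, which for $H(\alpha) \geq 1$ holds for all $k \leq N$ (with room to spare). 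Thus $k = 1,\dots, N$ all work, giving (after the gcd cleanup) strictly more than $N$ solutions — unless $\xi - \alpha = 0$, impossible as $\xi$ is irrational — contradicting the choice of $N$. Hence \eqref{4.2} must hold, completing the proof.
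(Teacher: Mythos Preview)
Your argument has a fatal gap. You propose to manufacture many solutions by taking the multiples $k\cdot(-p^2,0,q^2)$ for $k=1,\dots,N$ and then reducing each to a primitive vector. But since $\gcd(p,q)=1$ implies $\gcd(p^2,q^2)=1$, the vector $(-p^2,0,q^2)$ is already primitive, so $\gcd\bigl(k(-p^2),0,kq^2\bigr)=k$ and the primitive part of $k\cdot(-p^2,0,q^2)$ is \emph{always} $(-p^2,0,q^2)$ itself. All values of $k$ collapse to a single solution, not $N$ distinct ones. Your remark that ``distinct $k$ give distinct primitive vectors, or at worst one loses a bounded factor'' is simply false here.

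There is also a numerical error in your final estimate. You derive the requirement $k^{3+\delta}\leq 2^{1+\delta}N^{3+\delta}H(\alpha)^{-4-2\delta}$ and then claim this holds for all $k\leq N$ when $H(\alpha)\geq 1$. But the factor $H(\alpha)^{-4-2\delta}$ is $\leq 1$, and for large $H(\alpha)$ it forces $k$ to be much smaller than $N$; so even ignoring the primitivity issue, the bookkeeping does not close. The underlying reason is that your construction produces a vector of height $\approx H(\alpha)^2$, which loses a full power of $H(\alpha)$ compared with what is needed.

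The paper's proof uses a genuinely different parametrization: for each integer $u$ in a suitable range it sets $x_1+x_2X+x_3X^2=(u+X)(r-sX)$, i.e.\ $\mathbf{x}=(ur,\,r-us,\,-s)$. These vectors are automatically primitive (since $\gcd(r,s)=1$), distinct for distinct $u$ (the coordinate $x_2=r-us$ determines $u$), have height $\leq 2\max(1,|u|)H(\alpha)$ (linear in $H(\alpha)$, not quadratic), and satisfy $|x_1+x_2\xi+x_3\xi^2|=|u+\xi|\cdot s\,|\xi-\alpha|$. This linear growth in both $|u|$ and $H(\alpha)$ is exactly what makes the exponent $3+\delta$ come out, and the free parameter $u$ is what generates many solutions.
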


One of the most wanted achievements in Diophantine approximation would be
to prove an effective version of Roth's Theorem,
i.e., an inequality of the shape
\[
|\xi -\alpha |\geq C(\xi ,\delta )H(\alpha )^{-2-\delta}\
\mbox{for $\alpha\in\Qq$}
\]
with some effectively computable constant $C(\xi ,\delta )>0$.
Our Proposition implies that from an explicit upper bound for the
number of solutions of \eqref{4.1} one would be able to deduce
an effective inequality with instead of an exponent $2+\delta$ an
exponent $3+\delta$.
Save some special cases, such a result is much stronger 
than any of the effective results
on the approximation of algebraic numbers by rationals
that have been obtained so far.

\begin{proof} Let $\alpha$ be a rational number.
We can express $\alpha$ as $\alpha =r/s$, where
$r,s$ are rational integers with $s>0$, gcd$(r,s)=1$.
Thus, $H(\alpha )=\max (|r|,|s|)$.
Let $u$ be an integer with
\begin{equation}\label{4.3}
|u|
\leq
\Big(2^{2+\delta}(1+|\xi |)\cdot
|\xi -\alpha |\cdot H(\alpha )^{3+\delta}\Big)^{-1/(3+\delta )}.
\end{equation}
We assume that the right-hand side is at least $1$; otherwise \eqref{4.2}
follows at once.

Define the vector ${\bf x}=(x_1,x_2,x_3)$ by $x_1+x_2X+x_3X^2=(u+X)(r-sX)$.
Then ${\bf x}\in\Zz^3$, gcd$(x_1,x_2,x_3)=1$ and by \eqref{4.3},
\begin{eqnarray*}
|x_1+x_2\xi+x_3\xi^2| &=& |u+\xi |\cdot |r-s\xi |
\\
&\leq& (1+|\xi |)\max (1,|u|)\max (|r|,|s|)\cdot |\xi -\alpha |
\\
&\leq& \Big(2\max (1,|u|)\max (|r|,|s|)\Big)^{-2-\delta}\leq
H({\bf x})^{-2-\delta}.
\end{eqnarray*}
Thus, each integer $u$ with \eqref{4.3}
gives rise to a solution of \eqref{4.1}.
Consequently, the number of solutions of \eqref{4.1}, and hence $N$,
is bounded from below by the right-hand side of \eqref{4.3}.
Now \eqref{4.2} follows by a straightforward computation.
\end{proof}

\section{About the proofs of Theorems \ref{th:2.1} and \ref{th:3.1}}
\label{Section6}

We discuss in somewhat more detail the new ideas leading to the improved
bound for the number of subspaces in Theorem \ref{th:2.1} as compared
with Theorem B. For simplicity, we consider only the special case
$K=\Qq$, $S=\{\infty\}$, $O_S=\Zz$. Notice that 
$H({\bf x})=\|{\bf x}\|=\max (|x_1|\kdots |x_n|)$ for 
${\bf x}\in\Zz^n\setminus\{ {\bf 0}\}$.
Thus, we consider systems of inequalities
\begin{equation}\label{6.1}
|L_i({\bf x})|\leq C\cdot \|{\bf x}\|^{c_i}\ (i=1\kdots n)\ \ \mbox{in }
{\bf x}\in\Zz^n ,
\end{equation}
where $L_1\kdots L_n$ are linearly independent linear forms in
$X_1\kdots X_n$ with coefficients in $\Cc$ that are algebraic over $\Qq$,
$0<C\leq |\det (L_1\kdots L_n)|^{1/n}$, and $c_1+\cdots +c_n\leq -\delta$
with $0<\delta \leq 1$.

With a solution ${\bf x}\in\Zz^n$ we associate a convex body $\Pi ({\bf x})$,
consisting of those ${\bf y}\in\Rr^n$ such that
\[
|L_i({\bf y})|\leq C\|{\bf x}\|^{c_i}\ \mbox{for } i=1\kdots n.
\] 
Denote by $\lambda_i({\bf x})$ ($i=1\kdots n$) the successive minima of
this body. Then $\lambda_1({\bf x})\leq 1$, and by Minkowski's theorem,
$\prod_{i=1}^n \lambda_i({\bf x})\gg$ 
${\rm vol} \big(\Pi ({\bf x})\big)^{-1}\gg \|{\bf x}\|^{\delta}$, 
where here and below,
the constants implied by $\ll$, $\gg$ depend on $n$, $L_1\kdots L_n$ and
$\delta$.

There is an index $k\in\{ 1\kdots n-1\}$ such that
$\lambda_k({\bf x})/\lambda_{k+1}({\bf x})\ll \|{\bf x}\|^{-\delta /n}$.
To apply the approximation techniques going into the Subspace Theorem,
one needs
that the one but last minimum $\lambda_{n-1}({\bf x})$ is $\ll 1$.
In general, this need not be the case. 
Schmidt's ingenious idea was, to construct from $\Pi ({\bf x})$ 
a new convex
body $\widehat{\Pi}({\bf x})$ in $\wedge^{n-k}\Rr^n\cong \Rr^N$ 
with $N:={n\choose k}$
of which the one but last minimum is indeed $\ll 1$. The body
$\widehat{\Pi}({\bf x})$ may be described 
as the set of $\widehat{{\bf y}}\in\Rr^N$ such that
\begin{equation}\label{6.2}
|M_i(\widehat{{\bf y}})|\ll \|{\bf x}\|^{e_i({\bf x})}\ 
\mbox{for $i=1\kdots N$,}
\end{equation}
where $M_1\kdots M_N$ are linearly independent linear forms in $N$
variables with real algebraic coefficients, and 
$e_1({\bf x})\kdots e_N({\bf x})$
are exponents, which unfortunately may depend on ${\bf x}$, such that 
$\sum_{i=1}^N e_i({\bf x})< -\delta /2n^2$, say, see
\cite{Schm80} or \cite{Ev93} for more details on Schmidt's construction.
As mentioned before, the one but last minimum of $\widehat{\Pi}({\bf x})$ is
$\ll 1$. Then by Minkowski's Theorem,
the last minimum is $\gg \|{\bf x}\|^{\delta /2n^2}$.
This implies that $\widehat{\Pi}({\bf x})\cap\Zz^N$ spans
a linear subspace $T({\bf x})$ of $\Qq^N$ of dimension $N-1$.

In their proof of Theorem B, Schlickewei and the author had to partition
the set of solutions of \eqref{6.1} into classes in such a way,
that for any two solutions ${\bf x}$, ${\bf x}'$ in the same class, 
we have $e_i({\bf x})\approx e_i({\bf x}')$ for $i=1\kdots N$.
Then they proceeded further with solutions from the same class.

The continuation of the proof of Schlickewei and the 
author is then as follows.
Suppose there are solutions ${\bf x}_1\kdots {\bf x}_M$ in the same class 
such that $\|{\bf x}_1\|$ is large and $\log\|{\bf x}_{i+1}\|/\log\|{\bf x}_i\|$
are large for $i=1\kdots M-1$, where $M$ and ``large" depend on $\delta$, $n$
and $L_1\kdots L_n$.
Then one constructs an auxiliary 
multihomogeneous
polynomial $P({\bf Y}_1\kdots {\bf Y}_M)$ in $M$ blocks of $N$ variables
with integer coefficients, 
which is of degree $d_i$ in block ${\bf Y}_i$ for $i=1\kdots M$,
where $\|{\bf x}_1\|^{d_1}\approx\cdots\approx \|{\bf x}_M\|^{d_M}$.
The polynomial $P$ is
such that
$|P_I(\widehat{{\bf y}}_1\kdots \widehat{{\bf y}}_M)|<1$
for all $\widehat{{\bf y}}_h\in\widehat{\Pi}({\bf x}_h)\cap\Zz^N$, $h=1\kdots M$,
and all partial derivatives $P_I$ of $P$ of not too large order.
Then for these $I$, $\widehat{{\bf y}}_1\kdots \widehat{{\bf y}}_M$ we have that
$P_I(\widehat{{\bf y}}_1\kdots \widehat{{\bf y}}_M)=0$. By extrapolation
it then follows that all $P_I$ vanish identically on
$T({\bf x}_1)\times\cdots\times T({\bf x}_M)$. 
On the other hand, using an extension of Roth's Lemma, proved also 
by Schmidt, one shows that such a polynomial cannot exist.

This contradiction shows that solutions ${\bf x}_1\kdots{\bf x}_M$ as above
cannot exist. This leads to an upper bound 
depending on $n,\delta ,D$
for the number of subspaces
containing the solutions of \eqref{6.1} belonging to a given class.
We have to multiply this 
with the number
of classes
to get our final bound for the number of subspaces containing the
solutions from all classes together.
As it turns out,
the number of classes is at most
$\gamma_1^{n^2}\delta^{-\gamma_2n}$
with absolute constants $\gamma_1,\gamma_2$ and in terms of $n,\delta$,
this dominates the resulting bound
for the number of subspaces.

In their proof of Theorem \ref{th:2.1}, Ferretti and the author used,
instead of Schmidt's multi-homogeneous polynomial, the one
constructed by Faltings and W\"{u}stholz \cite{FW94}.
The latter polynomial has the great advantage, that the argument sketched above
works also for solutions ${\bf x}_1\kdots {\bf x}_M$ not necessarily
belonging to the same class. Thus, a subdivision of the solutions
of \eqref{6.1} into classes is not necessary, and we can save a factor
$\gamma_1^{n^2}\delta^{-\gamma_2n}$ in the final upper bound for the number of
subspaces.

The proof of the interval result Theorem \ref{th:3.1} follows the same lines.
First one proves Theorem \ref{th:3.1} in the special case that
the exceptional subspace
$U_0=({\bf 0})$. Assuming that Theorem \ref{th:3.1} is false,
one arrives at a contradiction using Schmidt's construction of
$\widehat{\Pi}({\bf x})$, Faltings' and W\"{u}stholz' construction
of an auxiliary polynomial, and Schmidt's extension of Roth's Lemma.
Then one proves the result for arbitrary $U_0$ 
by considering a system derived from
\eqref{6.1} with solutions taken from the quotient $\Qq^n/U_0$.          

We now discuss the constructions of an auxiliary polynomial 
by Schmidt and by Faltings and W\"{u}stholz, respectively.

We have to construct a non-zero multihomogeneous polynomial
\[
P({\bf Y}_1\kdots {\bf Y}_M)\in\Zz [{\bf Y}_1\kdots{\bf Y}_M]
\]
in $M$ blocks ${\bf Y}_1\kdots{\bf Y}_M$ of $N$
variables,
which is homogeneous of degree $d_h$ in the block ${\bf Y}_h$ for $h=1\kdots M$.
This polynomial can be expressed as
\[
\sum_{{\bf i}} c({\bf i})\prod_{h=1}^M\prod_{j=1}^N M_j({\bf Y}_h)^{i_{hj}}
\]
where the summation is over tuples ${\bf i}=(i_{hj})$ such that
$\sum_{j=1}^N i_{hj}=d_h$ for $h=1\kdots M$.

Schmidt's approach is to  
construct $P$ with coefficients with small absolute values,  such that
\[
c({\bf i})=0\ \mbox{if }\max_{1\leq j\leq N}
\left|\Big(\sum_{h=1}^M\frac{i_{hj}}{d_h}\Big)-\frac{M}{N}\right|
\geq\varepsilon
\]
for some sufficiently small $\varepsilon$. The conditions
$c({\bf i})=0$ may be viewed as linear equations in the unknown coefficients
of $P$. We may consider the indices $i_{hj}$ as random variables
with expectation $1/N$. Then the law of large numbers from probability
theory implies that for sufficiently large $M$,
the number of conditions $c({\bf i})=0$ is smaller than the total number
of coefficients of $P$. Now Siegel's Lemma gives a non-zero polynomial $P$
with coefficients with small absolute values.

The approach of Faltings and W\"{u}stholz is as follows.
Let $\alpha_{hj}\in\Rr$ with $|\alpha_{hj}|\leq 1$ for
$h=1\kdots m$, $j=1\kdots R$. Construct $P$ with coefficients
with small absolute values such that
\[
c({\bf i})=0\ \mbox{if }
\left|\sum_{h=1}^M\sum_{j=1}^N 
\alpha_{hj}\Big(\frac{i_{hj}}{d_h}-\frac{1}{N}\Big)\right|
\geq\varepsilon .
\]
Again, thanks to the law of large numbers, for sufficiently
large $M$ the number of conditions
$c({\bf i})=0$ is smaller than the number of coefficients of $P$,
and then $P$ is obtained via an application of Siegel's Lemma.

The choice of the weights $\alpha_{hj}$ is completely free.
In fact, if we are given solutions ${\bf x}_1\kdots {\bf x}_M$
of \eqref{6.1} from different classes, we may choose the $\alpha_{hj}$
in a suitable manner depending on the exponents $e_i({\bf x}_h)$
($i=1\kdots N$, $h=1\kdots M$) from \eqref{6.2},
and then show that $|P_I(\widehat{{\bf y}}_1\kdots \widehat{{\bf y}}_M)|<1$
for all ${\bf y}_h\in\widehat{\Pi}({\bf x}_h)\cap\Zz^N$, $h=1\kdots M$,
and all partial derivatives $P_I$ of $P$ of not too large order.
Then the proofs of Theorems \ref{th:2.1} and \ref{th:3.1}
are completed as sketched above.

\end{document}